%%%%%%%%%%%%%%%%%%%%%%%%%%%%%%%%%%%%%%%%%%%%%%%%%%%%%%%%%%%%%%%%%%%%%%%%%%%%%%%%
\documentclass[11pt]{amsart}

\usepackage{amsmath}
\usepackage{amssymb}
\usepackage{graphicx}
\usepackage{enumerate}

\DeclareMathOperator{\curl}{curl}
\DeclareMathOperator{\re}{Re}
\DeclareMathOperator{\Div}{div}

\newcommand{\R}{\mathbb R}
\newcommand{\C}{\mathbb C}
\newcommand{\Z}{\mathbb Z}
\newcommand{\N}{\mathbb N}
\newcommand{\Hb}{\mathcal{H}_\Phi^{\Omega}}
\newcommand{\Hf}{\mathsf{H}_{\Phi}^{\Omega_0}}

\newcommand{\Ab}{\mathbf A}
\newcommand{\Fb}{\mathbf F}
\newcommand{\ab}{\mathbf a}
\newcommand{\dd}{\mathop{}\!{\mathrm d}}

\newcommand{\ii}{\mathrm i}

\newcommand{\Ef}{\mathcal E_\Phi}
\newcommand{\En}{\mathrm E(\Phi)}
\newcommand{\Gf}{\mathcal G_\Phi}
\newcommand{\Gn}{\mathrm G(\Phi)}
\newcommand{\cI}{\mathcal I}
\newcommand{\cL}{\mathcal L}
\newcommand{\cE}{\mathcal E}

\newcommand{\cX}{\mathcal X}

\newcommand{\cG}{\mathcal G}
\newcommand{\rG}{\mathrm G}
\newcommand{\jb}{\mathbf j}
\newcommand{\cH}{\mathcal H}

\newcommand{\Bl}{\color{blue}}

 \usepackage{xcolor}
\definecolor{blue(ryb)}{rgb}{0.01, 0.28, 1.0}
\definecolor{brandeisblue}{rgb}{0.0, 0.44, 1.0}
\definecolor{ceruleanblue}{rgb}{0.16, 0.32, 0.75}
\definecolor{cobalt}{rgb}{0.0, 0.28, 0.67}
\definecolor{coolblack}{rgb}{0.0, 0.18, 0.39}
\definecolor{darkblue}{rgb}{0.0, 0.0, 0.55}
\usepackage[hidelinks]{hyperref}
\hypersetup{
    colorlinks,
    citecolor=darkblue,
    filecolor=black,
    linkcolor=darkblue,
    urlcolor=black
}

%\newtheorem{theorem}{Theorem}[section]
%\newtheorem{conjecture}[theorem]{Conjecture}
%\newtheorem{lemma}[theorem]{Lemma}
%\newtheorem{proposition}[theorem]{Proposition}
%\newtheorem{corollary}[theorem]{Corollary}
%
%\theoremstyle{remark}
%\newtheorem{remark}[theorem]{Remark}
%\newtheorem{example}[theorem]{Example}
%\newtheorem{assumption}[theorem]{Assumption}
%\newtheorem{definition}[theorem]{Definition}
%%%%%%%%%%%%%%%%%%%%%%%%%%%%%%%%%%%%%%%%%%%%%%%%%%%%%%%%%%%%%%%%%%%%%%%%%%%%%%%%
\newtheorem{theorem}{Theorem}[section]
\newtheorem{corollary}[theorem]{Corollary}

\newtheorem{proposition}[theorem]{Proposition}
\theoremstyle{definition}

\newtheorem{remark}[theorem]{Remark}

\numberwithin{equation}{section}
%%%%%%%%%%%%%%%%%%%%%%%%%%%%%%%%%%%%%%%%%%%%%%%%%%%%%%%%%%%%%%%%%%%%%%%%%%%%%%%%

\title[Local strong magnetic fields]{Local strong magnetic fields and the Little-Parks effect}

\author[A. Kachmar]{Ayman Kachmar}
\address{School of Science and Engineering, The Chinese University of Hong Kong Shenzhen, Guangdong, 518172, P.R. China.}
\email{akachmar@cuhk.edu.cn}

\author[M. Sundqvist]{Mikael Sundqvist}
\address{Department of Mathematics, Lund University, Lund, Sweden.}
\email{mikael.persson{\_}sundqvist@math.lth.se}

%

%\author[S. Atsushiba]{Sachiko Atsushiba}
%
%\address[S. Atsushiba]{??????????????, Japan}
%\email{{\tt ????????@????}}
%
%\author[W. Takahashi]{Wataru Takahashi}
%\address[W. Takahashi]{????????, Taiwan}
%\email{\tt ???????@???????}

\keywords{Ginzburg-Landau functional, magnetic Laplacian, Little-Parks effect, Strong magnetic field.}

\subjclass[2020]{35J10, 35Q56, 81Q10}

%\date{\today}

\begin{document}

\begin{abstract}
Starting from the Ginzburg--Landau model in a planar simply connected domain, with a local compactly supported applied magnetic field, we derive  an effective model in the strong field limit, defined on a non-simply connected domain. The effective model features oscillations in the Little-Parks and Aharonov--Bohm spirit. We also discuss a similar question for the lowest eigenvalue of the magnetic Laplacian.
\end{abstract}

\maketitle

%%%%%%%%%%%%%%%%%%%%%%%%%%%%%%%%%%%%%%%%%%%%%%%%%%%%%%%%%%%%%%%%%%%%%%%%%%%%%%%%
\section{Introduction}\label{sec:intro}

\subsection{The Little-Parks effect}
As the applied  magnetic flux varies, superconductors undergo phase transitions between normal and superconducting states. The Little-Parks effect is observed when the transition is non-monotone and exhibits oscillations between superconducting and normal phases \cite{LP}. Within the Ginzburg-Landau theory,
examples of this sort  were provided previously  in several settings:
\begin{enumerate}[a)]
\item Under a constant magnetic field, when imposing a Robin condition \cite{G1, KS} or when restricting the superconducting sample to a thin domain \cite{FS, HK, RS, ShSt};
\item under Aharonov-Bohm magnetic fields  \cite{KP};
\item under certain positive non-homogeneous radial magnetic fields \cite{FS}.
\end{enumerate}

In this paper, we consider a strong local magnetic field and prove that oscillations occur indefinitely in the Little--Parks spirit.

\subsection{The local magnetic field}

Consider two simply connected domains \(\omega,\Omega\subset\R^2\) such that \(\overline{\omega}\subset\Omega\), and assume that their boundaries, \(\partial\omega\) and \(\partial\Omega\),  are simple smooth \(C^1\) curves.
By a local magnetic field we mean a non homogeneous magnetic field of the form $b\mathbf 1_\omega\hat z$, where $\hat z=(0,0,1)^\top$, $b>0$ and $\mathbf 1_\omega$ is the characteristic function of $\omega$,
\begin{equation}\label{eq:def-B}
\mathbf 1_\omega(x)=\begin{cases}1&\text{ if }x\in\omega,\\
0&\text{ if }x\not\in\omega.
\end{cases}
\end{equation}
This is an example of a magnetic step, which has recently received considerable attention in the context of superconductivity and spectral theory \cite{AG, AHK, AKPS, G2}.
Corresponding to the magnetic field $b\mathbf 1_\omega\hat z$ is the magnetic flux
\begin{equation}\label{eq:def-flux}
\Phi:=\frac1{2\pi}\int_{\Omega}b\mathbf 1_\omega\dd x=\frac{b|\omega|}{2\pi}.
\end{equation}
We introduce the vector potential \(\Fb:\overline{\Omega}\to\mathbb R^2\) defined as
\begin{equation}\label{eq:def-F}
\Fb=(-\partial_2\phi,\partial_1\phi),
\end{equation}
where \(\phi\) is the unique solution of
\[ -\Delta\phi=\frac{2\pi}{|\omega|}\mathbf 1_\omega\quad\text{ in } \Omega,\quad \phi=0\text{ on }\partial\Omega,\]
and notice that  \(\Fb\) satisfies
\begin{equation}\label{eq:prop-A}
\curl\Fb=\frac{2\pi}{|\omega|}\mathbf 1_\omega,\quad  {\rm div}\Fb=0\text{ on }\Omega,\quad\text{ and } \nu\cdot\Fb=0\text{ on }\partial\Omega,
\end{equation}
where \(\nu\) is the unit normal to \(\partial\Omega\) pointing inward \(\Omega\). Hence, $\Phi\Fb$ is a vector potential corresponding to the magnetic field $b\mathbf 1_\omega\hat z$, as $\curl(\Phi\Fb)=b\mathbf 1_\omega$ in view of \eqref{eq:def-flux}.\medskip
\subsection{The Ginzburg-Landau model}
Let us suppose that $\Omega$ is the cross section of a cylindrical superconductor subject to the magnetic field  $b\mathbf 1_\omega\hat z$, with corresponding magnetic flux $\Phi$ in \eqref{eq:def-flux}. As the magnetic flux $\Phi$ increases, it is well known that the superconductor undergoes a phase transition between the superconducting and normal states.  The state of superconductivity  is described by a configuration $(\psi,\Ab)$ such that
\[
    \psi:\Omega\to\C,\quad \Ab:\Omega\to\R^2,
\]
with $|\psi|^2$ representing the local density of superconductivity,  $\curl\Ab$ representing the local induced magnetic field, and
\begin{equation}
    \label{eq:def-supercurrent}
    \jb(\psi,\Ab)
    :=
    \re \langle \psi,(-\ii\nabla - \Ab)\psi\rangle_{\C}
\end{equation}
measures the supercurrent. At equilibrium, the physically interesting configurations are the minimizers (or even critical points) of the energy functional
\begin{equation}\label{eq:def-GL}
\Ef(\psi,\Ab)=\int_\Omega\Biggl(|(-\ii\nabla-\Phi \Ab)\psi|^2-\kappa^2|\psi|^2+\frac{\kappa^2}{2}|\psi|^4+\Phi^2|\curl(\Ab-\Fb)|^2\Biggr)\dd x,
\end{equation}
defined on the following function space
\begin{equation}\label{eq:def-space}
    \cH
    =
    \{
        (\psi,\Ab)\in H^1(\Omega;\C)\times H^1(\Omega;\R^2)\colon
        \Div\Ab=0\text{ on }\Omega,~\nu\cdot\Ab=0\text{ on }\partial\Omega
    \}.
\end{equation}
The definition of the functional $\Ef$ involves a positive constant $\kappa$ called the Ginzburg--Landau parameter, which is a characteristic of the superconducting material. Throughout this paper, $\kappa$ will be fixed.

We introduce also the ground state energy \(\En\) of the functional $\Ef$ as
\begin{equation}\label{eq:def-energy}
\En=\inf\{\Ef(\psi,\Ab)\colon (\psi,\Ab)\in\cH\}.
\end{equation}
We say that a configuration $(\psi,\Ab)$ is a minimizer of $\Ef$ if $(\psi,\Ab)\in\cH$ and $\Ef(\psi,\Ab)=\En$.
\subsection{The effective  model}
Our aim is to understand the behavior of $\En$ and the minimizing configurations in the limit of large $\Phi$ (which by \eqref{eq:def-flux} is equivalent to the limit of large $b$). We will show that the strong magnetic field in the interior domain \(\omega\) will essentially force the minimizing \(\psi\) to be zero there. This motivates the introduction of an effective functional, defined on the space
\begin{equation}\label{eq:def-space0}
\cH_0=\{(u,\Ab)\in \cH\colon u=0\text{ on }\omega\},
\end{equation}
as
\begin{multline}\label{eq:def-GL0}
\Gf(u,\Ab)=
\int_{\Omega_0}\Bigl(|(-\ii\nabla-\Phi \Ab)u|^2-\kappa^2|u|^2+\frac{\kappa^2}{2}|u|^4 \Bigr)\dd x\\+\Phi^2\int_{\Omega}|\curl(\Ab-\Fb)|^2\dd x,
\end{multline}
where $\Omega_0:=\Omega\setminus{\overline\omega}$.  Note that $\Gf$ acts on configurations $(u,\Ab)$ where the function $u$ has support in the non-simply connected domain $\Omega_0$. We can and will view it as being defined in \(\Omega_0\) with a Dirichlet boundary condition $u=0$ on $\partial\omega$. In fact,  thanks to the smoothness of $\partial\omega$, a function $u$ belongs to the space
\begin{equation}\label{eq:def-space0*}
\cX(\Omega_0)=\{u\in H^1(\Omega_0;\C)\colon u=0\mbox{ on }\partial\omega\}
\end{equation}
if and only if it has an extension $\widetilde u\in H^1(\Omega;\C)$ and $\widetilde u=0$ on $\omega$.

The functional $\Gf$ turns out to be the relevant approximation of the functional $\Ef$ in the limit of large $\Phi$. We describe this approximation in Theorem~\ref{thm:GL} below, which  involves the  effective ground state energy that we define as
\begin{equation}\label{eq:def-energy0}
\Gn=\inf\{\Gf(u,\Ab)\colon (u,\Ab)\in \cH_0\}.
\end{equation}
Thanks to invariance under gauge transformations, it suffices to study $\Gn$  for $0\leq \Phi<1$. In fact, on $\Omega_0$, we define the  function
\begin{equation}\label{eq:def-U-1}
U_n(x)=\exp\left(\ii n\int_{\ell(x_*,x)}\Fb\cdot\dd \mathrm r\right),
\end{equation}
where $x_*$ is a fixed point in $\Omega_0$ and $\ell(x_*,x)$ is any path in $\Omega_0$ connecting $x_*$ and $x$. The definition of $U_n$ is independent of the choice of the path and the relevance of $U_n$ is that  it is $C^1$ smooth and satisfies (see \cite[p. 21]{FH-b})
\begin{equation}\label{eq:prop-U1}
\nabla U_n=\ii  n U_n \Fb \text{ on }\Omega_0.
\end{equation}
Consequently, we have the following identity
\begin{equation}\label{eq:prop-U1*}
U_n^{-1}(-\ii\nabla-\Phi\Fb)U_n=-\ii\nabla-(\Phi-n)\Fb,
\end{equation}
which allows us to shift $\Phi$  by an integer  without changing the energy $\Gn$.
\begin{theorem}\label{thm:GL}
As $\Phi\to+\infty$, the ground state energies $\En$ and $\Gn$ introduced in \eqref{eq:def-energy} and \eqref{eq:def-energy0} satisfy
\[\En= \Gn+o(1).\]
Moreover, it holds the following.
\begin{enumerate}[\rm i)]
\item The function $\Phi\mapsto \Gn$ is periodic with period $1$.
\item If $0\leq \Phi_0<1$ and $(\psi_n,\Ab_n)_{n\geq 1}$ is a sequence such that, every $(\psi_n,\Ab_n)$ is a minimizing configuration of $\Ef$ for $\Phi=\Phi_n:=\Phi_0+n$, then there exists a minimizer $(u_*,\Ab_*)\in \cH_0$ of $\cG_{\Phi_0}$ and a subsequence $(n_k)$ such that
\[\begin{gathered}
|\psi_{n_k}|\to |u_*|\text{ in }H^1(\Omega; \R),\\
U_{n_k}^{-1}\psi_{n_k}\to u_*\text{ in }H^1(\Omega_0;\C),\\ \jb(\psi_{n_k},\Ab_{n_k})\to\jb_*\text{ in }L^2(\Omega;\R^2),
\end{gathered}\]
where  $\jb_*$ is defined as
\[\quad \jb_*= {\re} \langle u_*,(-\ii\nabla-{\Phi_0}\Ab_*)u_*\rangle_\C.
\]
\item If $\Phi\in \N$, then any minimizer $(u,\Ab)\in\cH_0$ of $\Gf$ satisfies
\[ |u|>0 \text {\ { in \(\Omega _ 0\)}},\quad \Ab=\Fb,\quad \jb(u,\Ab)=0.\]
\end{enumerate}
\end{theorem}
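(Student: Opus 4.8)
The plan is to establish the comparison $\En=\Gn+o(1)$ first, and then read off the three assertions, with (i) and (iii) essentially algebraic once the analysis of minimizers behind (ii) is in place. The upper bound is immediate: any $(u,\Ab)\in\cH_0$ satisfies $u\equiv0$ on $\omega$, so $\Ef(u,\Ab)=\Gf(u,\Ab)$, and since $\cH_0\subset\cH$ we get $\En\le\Gn$. For the rest I would record the standard a priori bounds for a minimizer $(\psi,\Ab)$ of $\Ef$: testing against $(0,\Fb)$ gives $\En\le0$, the maximum principle gives $\|\psi\|_{\infty}\le1$, and then $\Phi^2\int_\Omega|\curl(\Ab-\Fb)|^2\le\kappa^2|\Omega|$. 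Since $\Ab-\Fb$ is divergence free with vanishing normal trace on $\partial\Omega$ and $\Omega$ is simply connected, the div--curl estimate $\|\Ab-\Fb\|_{H^1(\Omega)}\le C\|\curl(\Ab-\Fb)\|_{L^2(\Omega)}$ yields $\|\Ab-\Fb\|_{H^1}=O(\Phi^{-1})$; in particular the rescaled field $\mathbf c:=\Phi(\Ab-\Fb)$ stays bounded in $H^1(\Omega)$.

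The heart of the matter is to show that the strong field expels $\psi$ from $\omega$ and forces the Dirichlet condition on $\partial\omega$ in the limit. Writing $-\ii\nabla-\Phi\Ab=(-\ii\nabla-\Phi\Fb)-\mathbf c$ and using the lower bound $\int_\omega|(-\ii\nabla-\Phi\Fb)\psi|^2\ge c_0\,b\int_\omega|\psi|^2$ (the bottom of the magnetic Neumann realization on $\omega$ with the constant field $b=2\pi\Phi/|\omega|$), together with a Young splitting whose error $\int_\omega|\mathbf c|^2|\psi|^2\le\|\mathbf c\|_{L^4}^2(\int_\omega|\psi|^4)^{1/2}$ is $o(1)$ because $\int_\omega|\psi|^4\le\int_\omega|\psi|^2$ and $\|\psi\|_\infty\le1$, I would deduce from the kinetic bound that $\int_\omega|\psi|^2=O(b^{-1})\to0$. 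The diamagnetic inequality gives $\int_\omega|\nabla|\psi||^2\le\int_\omega|(-\ii\nabla-\Phi\Ab)\psi|^2\le\kappa^2|\Omega|$, so $|\psi|$ is bounded in $H^1(\omega)$ while tending to $0$ in $L^2(\omega)$; the multiplicative trace inequality $\|\,|\psi|\,\|^2_{L^2(\partial\omega)}\le C\|\,|\psi|\,\|_{L^2(\omega)}\|\,|\psi|\,\|_{H^1(\omega)}$ then forces the trace of $\psi$ on $\partial\omega$ to be $o(1)$.

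Granting this, the lower bound $\En\ge\Gn-o(1)$ follows by discarding the contribution of $\omega$, which is bounded below by $-\kappa^2\int_\omega|\psi|^2=o(1)$, and comparing $\Gf(\psi|_{\Omega_0},\Ab)$ with $\Gn$: since the trace of $\psi$ on $\partial\omega$ is $o(1)$, a small $H^1$ correction supported in a collar (equivalently, stability of the Dirichlet infimum under $o(1)$ boundary data) produces an admissible $u\in\cX(\Omega_0)$ with $\Gf(u,\Ab)\le\Ef(\psi,\Ab)+o(1)$, giving $\En=\Gn+o(1)$. For (i), I substitute $\mathbf c=\Phi(\Ab-\Fb)$ to rewrite $\Gf$ as $\int_{\Omega_0}(|(-\ii\nabla-\Phi\Fb-\mathbf c)u|^2-\kappa^2|u|^2+\tfrac{\kappa^2}{2}|u|^4)+\int_\Omega|\curl\mathbf c|^2$; by \eqref{eq:prop-U1*} the map $u\mapsto U_1^{-1}u$ with $\mathbf c$ fixed replaces $\Phi\Fb$ by $(\Phi-1)\Fb$ and leaves every other term invariant, hence is an energy-preserving bijection onto the admissible set at flux $\Phi-1$, whence $\Gn$ is $1$-periodic. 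For (ii), after the gauge $\widetilde\psi_n=U_n^{-1}\psi_n$ on $\Omega_0$, the sequences $\widetilde\psi_n$ (bounded in $H^1(\Omega_0)$ via $(-\ii\nabla-\Phi_0\Fb)\widetilde\psi_n=U_n^{-1}[(-\ii\nabla-\Phi_n\Ab_n)\psi_n+\mathbf c_n\psi_n]$) and $\mathbf c_n=\Phi_n(\Ab_n-\Fb)$ (bounded in $H^1(\Omega)$) have weakly convergent subsequences with limits $u_*$ and $\mathbf c_*$, and the confinement places $u_*$ in $\cX(\Omega_0)$; weak lower semicontinuity together with the identity $\Ef(\psi_n,\Ab_n)=\En=\Gn+o(1)$ and periodicity shows $(u_*,\Ab_*)$ with $\Ab_*=\Fb+\mathbf c_*/\Phi_0$ minimizes $\cG_{\Phi_0}$ and upgrades weak to strong convergence, yielding the stated limits of $|\psi_{n_k}|$, $U_{n_k}^{-1}\psi_{n_k}$ and $\jb(\psi_{n_k},\Ab_{n_k})$.

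For (iii), take $\Phi=n\in\N$, so $U_n$ is globally single valued and, by \eqref{eq:prop-U1*}, conjugation turns $-\ii\nabla-n\Fb$ into $-\ii\nabla$. Setting $v=U_n^{-1}u$ and $\mathbf d=\Ab-\Fb$, the energy becomes $\int_{\Omega_0}(|(-\ii\nabla-n\mathbf d)v|^2-\kappa^2|v|^2+\tfrac{\kappa^2}{2}|v|^4)+n^2\int_\Omega|\curl\mathbf d|^2$, which by the diamagnetic inequality is bounded below by the field-free functional $\int_{\Omega_0}(|\nabla|v||^2-\kappa^2|v|^2+\tfrac{\kappa^2}{2}|v|^4)$, attained by taking $\mathbf d=0$ and $v=f_*$ a real minimizer of the latter. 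Hence any minimizer saturates both inequalities: $\curl\mathbf d=0$, and since $\Omega$ is simply connected and $\mathbf d$ is divergence free with vanishing normal trace this forces $\mathbf d=0$, i.e. $\Ab=\Fb$, while the diamagnetic equality forces $v$ to have constant phase and thus $\jb(u,\Ab)=0$; the strict positivity $|u|>0$ in $\Omega_0$ then follows from the strong maximum principle applied to the nontrivial nonnegative solution of $-\Delta|u|=\kappa^2|u|(1-|u|^2)$ with the Dirichlet condition on $\partial\omega$. I expect the main obstacle to be the confinement step---in particular, deriving the magnetic lower bound and the trace estimate uniformly while $\Ab$ is known only to be an $O(\Phi^{-1})$ perturbation of $\Fb$---and the accompanying upgrade from weak to strong convergence, which hinges on the energy identity holding with no loss.
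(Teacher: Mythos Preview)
Your overall architecture is close to the paper's, but the step you flag as ``a small $H^1$ correction supported in a collar (equivalently, stability of the Dirichlet infimum under $o(1)$ boundary data)'' is a genuine gap. From the confinement argument you only get $\|\,|\psi|\,\|_{L^2(\partial\omega)}=o(1)$; an $H^1$-small extension (hence an $o(1)$ change in energy) would require $H^{1/2}$-smallness of the trace, which you do not have --- and you cannot upgrade it, because while $|\psi|$ stays bounded in $H^1(\omega)$, the complex $\psi$ itself may have large gradient in $\omega$ due to magnetic oscillations, so neither a cut-off in a collar of $\Omega_0$ (you lack smallness of $\psi$ there) nor subtracting an $H^1$-extension of $\psi|_\omega$ (its $H^1$-norm is not small) gives a cheap competitor in $\cX(\Omega_0)$. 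This gap then contaminates your argument for (ii), where you invoke ``the identity $\Ef(\psi_n,\Ab_n)=\En=\Gn+o(1)$'' to deduce minimality and strong convergence; as written this is circular.

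The fix is to run your compactness argument in (ii) \emph{first} and let it deliver the lower bound: with $\widetilde\psi_n=U_n^{-1}\psi_n$ bounded in $H^1(\Omega_0)$ and $\mathbf c_n$ bounded in $H^1(\Omega)$, weak lower semicontinuity alone gives $\liminf\En\ge\cG_{\Phi_0}(u_*,\Ab_*)\ge\rG(\Phi_0)=\Gn$, which together with $\En\le\Gn$ yields both the energy asymptotics and the minimality of $(u_*,\Ab_*)$; strong convergence then follows from convergence of norms. (This is what the paper does, localizing first to the sets $\Omega_\varepsilon$ via elliptic $H^2$ estimates and using monotone convergence, though your direct $H^1(\Omega_0)$ route also works.) Two small points: your formula $\Ab_*=\Fb+\mathbf c_*/\Phi_0$ is undefined when $\Phi_0=0$ --- simply set $\Ab_*=\Fb$ in that case, as the paper does; and for the paper's $H^2$ bound on $\mathbf c=\Phi(\Ab-\Fb)$ (rather than your $H^1$) one uses the second Ginzburg--Landau equation $\nabla^\bot\curl(\Ab-\Fb)=-\Phi^{-1}\jb(\psi,\Ab)$, which you have available. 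Your treatments of (i) and (iii) are correct and essentially identical to the paper's.
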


We will prove Theorem~\ref{thm:GL} in Section~\ref{sec:funcconv}.

\subsection{Oscillations for the effective model}
Consider the magnetic Laplacian in the non-simply connected domain $\Omega_0=\Omega\setminus\overline{\omega}$,
\begin{equation}\label{eq:def-op-fl}
\Hf=(-\ii\nabla-\Phi\Fb)^2\,,
\end{equation}
with Neumann boundary condition
$\nu\cdot \nabla u=0$ on $\partial\Omega$, and Dirichlet boundary condition $u=0$ on $\partial\omega$.
 Since we do not impose Neumann boundary condition on the inner boundary \(\partial\omega\), \(\Hf\) differs from the Neumann realization  considered in~\cite{HHOO}, but it can still be studied using the same methods as in~\cite{HHOO}.
The operator $\Hf$ has compact resolvent, its spectrum is purely discrete, and its lowest eigenvalue is
\begin{equation}\label{eq:def-ev-Hf}
\lambda_1^{\Omega_0}(\Phi)=\inf_{\substack{u\in \cX(\Omega_0)\\u\not=0}}\frac{\|(-\ii\nabla-\Phi\Fb)u\|^2_{L^2(\Omega_0)}}{\|u\|^2_{L^2(\Omega_0)}},
\end{equation}
where $\cX(\Omega_0)$ is the space introduced in \eqref{eq:def-space0*}.

With the help of $\lambda_1^{\Omega_0}(\Phi)$, we may characterize whether a minimizer of $\Gf$ is identically zero or not.  Actually, it is straightforward to verify that
\begin{equation}\label{eq:criterion}
\lambda_1^{\Omega_0}(\Phi)<\kappa^2\Longrightarrow\Gn<0,
\end{equation}
by using $(tv,0)$ as a trial state, with $v$ a ground state of $\lambda_1^{\Omega_0}(\Phi)$ and $t>0$ sufficiently small.
Since $\Gn\leq \Gf(0)=0$, then denoting by $u_\Phi$ a minimizer of $\Gf$, we have that $u_\Phi$  is not identically zero whenever $\lambda_1^{\Omega_0}(\Phi)<\kappa^2$.

The transition from $u_\Phi=0$ to $u_\Phi\not=0$ is then related to the monotonicity of the eigenvalue $\lambda_1^{\Omega_0}(\Phi)$, which we would like to explore next.
The diamagnetic inequality yields that
\begin{equation}\label{eq:dia-mag}
\lambda_1^{\Omega_0}(\Phi) \geq \lambda_1^{\Omega_0}(0)>0,
\end{equation}
and thanks to \eqref{eq:prop-U1*}, the following holds.
\begin{proposition}[{\cite[Theorem~1.1]{HHOO}}]\label{prop:Hf}
The function $\Phi\mapsto\lambda_1^{\Omega_0}(\Phi)$ is continuous, periodic with period $1$,   non-constant, and its minimum is attained at $\Phi=0$  while its maximum is attained at $\Phi=\frac12$. Moreover,
\begin{equation}\label{eq:ev-flux}
\begin{gathered}
\lambda_1^{\Omega_0}(\Phi+\mbox{$\frac12$})=\lambda_1^{\Omega_0}(\Phi-\mbox{$\frac12$})\,,\\
\lambda_1^{\Omega_0}(\Phi)>\lambda_1^{\Omega_0}(0)\quad\Bigl(\Phi\not\in \N\Bigr)\,,
\end{gathered}
\end{equation}
and
\begin{equation}\label{eq:ev-max}
\lambda_1^{\Omega_0}(\Phi)<\lambda_1^{\Omega_0}(\mbox{$\frac12$})\quad\Bigl(\Phi\not\in \mbox{$\frac12$}+\N\Bigr).
\end{equation}
\end{proposition}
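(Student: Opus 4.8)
The plan is to separate the structural (symmetry) statements, which come from gauge invariance and complex conjugation, from the genuinely analytic statements about the location of the extrema, and to isolate the maximality at $\Phi=\frac12$ as the single hard point.

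First, the symmetries. Since $\curl\Fb=\frac{2\pi}{|\omega|}\mathbf 1_\omega$ vanishes on $\Omega_0$ and has circulation $\oint_\gamma\Fb\cdot\dd\mathrm r=\int_\omega\curl\Fb\,\dd x=2\pi$ around a loop $\gamma$ encircling $\omega$ (by Stokes), the function $U_n$ of \eqref{eq:def-U-1} is single-valued on $\Omega_0$ for every $n\in\Z$, and the intertwining \eqref{eq:prop-U1*} maps the Rayleigh quotient of $\lambda_1^{\Omega_0}(\Phi)$ isometrically onto that of $\lambda_1^{\Omega_0}(\Phi-n)$; hence $\Phi\mapsto\lambda_1^{\Omega_0}(\Phi)$ is $1$-periodic. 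Next, $u\mapsto\overline u$ preserves $\cX(\Omega_0)$ and the $L^2$ norm and satisfies $|(-\ii\nabla-\Phi\Fb)u|=|(-\ii\nabla+\Phi\Fb)\overline u|$ pointwise, so the Rayleigh quotients at $\Phi$ and at $-\Phi$ agree and $\lambda_1^{\Omega_0}$ is even. Combining evenness with periodicity gives $\lambda_1^{\Omega_0}(\Phi)=\lambda_1^{\Omega_0}(1-\Phi)$ (symmetry about $\Phi=\frac12$) and in particular the first identity of \eqref{eq:ev-flux}. Continuity, indeed local Lipschitz continuity, follows by using the ground state $u_\Phi$ at one flux as a trial function at a nearby flux: the form $\|(-\ii\nabla-\Phi\Fb)u\|^2_{L^2(\Omega_0)}$ is a quadratic polynomial in $\Phi$ whose coefficients are controlled by $\|u\|_{H^1}$, so $|\lambda_1^{\Omega_0}(\Phi)-\lambda_1^{\Omega_0}(\Phi')|\le C|\Phi-\Phi'|$ on each bounded flux interval.

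Second, the minimum and the second line of \eqref{eq:ev-flux}. The diamagnetic inequality $|\nabla|u||\le|(-\ii\nabla-\Phi\Fb)u|$ together with $|u|\in\cX(\Omega_0)$ yields \eqref{eq:dia-mag}, so the minimum is attained at $\Phi=0$ and, by periodicity, at every integer. For the strictness, suppose $\lambda_1^{\Omega_0}(\Phi)=\lambda_1^{\Omega_0}(0)$ and let $u$ be a ground state at flux $\Phi$. Then $|u|$ is a ground state at flux $0$, hence $|u|>0$ in the open domain $\Omega_0$ by the strong maximum principle, and equality must hold pointwise in the diamagnetic inequality. Writing $u=|u|\ee^{\ii\chi}$ on $\Omega_0$, this forces $\nabla\chi=\Phi\Fb$, and single-valuedness of $\ee^{\ii\chi}$ around $\gamma$ requires $2\pi\Phi=\Phi\oint_\gamma\Fb\cdot\dd\mathrm r\in 2\pi\Z$, i.e. $\Phi\in\Z$. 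Thus $\lambda_1^{\Omega_0}(\Phi)>\lambda_1^{\Omega_0}(0)$ for $\Phi\notin\Z$, which with periodicity is the second line of \eqref{eq:ev-flux}, and taking $\Phi=\frac12$ already shows $\lambda_1^{\Omega_0}$ is non-constant.

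The remaining and main point is the maximality at $\Phi=\frac12$, i.e. \eqref{eq:ev-max}. Because $\curl\Fb=0$ on $\Omega_0$, the potential $\Phi\Fb$ is a flat connection whose only invariant is the holonomy $\ee^{2\pi\ii\Phi}$, and it is natural to pass to the infinite cyclic cover of the annulus $\Omega_0$, on which the connection is gauge-trivial; there $\lambda_1^{\Omega_0}(\Phi)$ is the lowest Floquet band $E(k)$ of the flux-free Laplacian at quasimomentum $k=2\pi\Phi$, and the claim is the band-edge maximality $E(k)<E(\pi)$ for $k\not\equiv\pi$. At $k=\pi$, i.e. $\Phi=\frac12$, the holonomy is $-1$ and the problem becomes real: on the connected double cover $\widehat\Omega$ with deck involution $\sigma$, the value $\lambda_1^{\Omega_0}(\frac12)$ equals the lowest eigenvalue of $-\Delta$ in the antisymmetric sector $\{f\circ\sigma=-f\}$, whereas $\lambda_1^{\Omega_0}(0)$ is the lowest eigenvalue in the symmetric sector (the overall ground state, which is positive hence symmetric). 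The upper bound $\lambda_1^{\Omega_0}(\Phi)\le\lambda_1^{\Omega_0}(\frac12)$, with equality only at half-integers, is the heart of the matter and is proved in \cite{HHOO} through the nodal structure of the antisymmetric ground state together with a unique continuation argument. I expect this to be the main obstacle, since—unlike the minimum—it cannot be obtained from a gauge trial function: inserting a phase into the half-flux ground state only increases the energy. Granting it, periodicity and the symmetry about $\frac12$ upgrade the inequality to \eqref{eq:ev-max} and identify $\Phi=\frac12$ as the maximizer, completing the proposition.
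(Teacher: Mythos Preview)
The paper does not supply a proof of this proposition; it is quoted as \cite[Theorem~1.1]{HHOO}, with only the two sentences preceding it (the diamagnetic bound \eqref{eq:dia-mag} and the gauge identity \eqref{eq:prop-U1*}) gesturing at the structural parts. Your sketch is correct and considerably more explicit than anything in the paper: periodicity via $U_n$, evenness via complex conjugation, the symmetry about $\tfrac12$ from combining the two, continuity from the quadratic dependence on $\Phi$, the minimum at integers from diamagnetism, and the strict inequality $\lambda_1^{\Omega_0}(\Phi)>\lambda_1^{\Omega_0}(0)$ for $\Phi\notin\Z$ from the equality case of the diamagnetic inequality are all argued cleanly. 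You also correctly isolate the strict maximum at $\Phi=\tfrac12$ (hence \eqref{eq:ev-max}) as the one genuinely nontrivial step, and your description of its mechanism---passing to the double cover where the half-flux problem becomes the real antisymmetric sector, and using the nodal structure of the antisymmetric ground state together with unique continuation---is an accurate summary of the method in \cite{HHOO}. In short, your treatment is consistent with the paper's, which simply cites \cite{HHOO}, and fills in the elementary parts that the paper leaves implicit.
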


Consequently, we get that minimizers of $u_\Phi$ oscillate between $u_\Phi=0$ (normal state) and $u_\Phi\not=0$ (superconducting state) as  $\Phi$ varies.
\begin{corollary}\label{cor:oscillations1}~
\begin{enumerate}[\rm A)]
\item There exists a positive constant $c_*\in(0,1]$ such that the following is true. Suppose that $0<\kappa<c_*\sqrt{\lambda_1^{\Omega_0}(1/2)}$,  and consider the two sequences $(\Phi_n:=n-\frac12)$ and $(\Phi_n':=n)$. Then, for all $n\in \N$, we have $0<\Phi_n<\Phi_n'<\Phi_{n+1}$ and
\begin{enumerate}[\rm i)]
\item $u_{\Phi_n}=0$ for any critical point $(u_{\Phi_n},\Ab_{\Phi_n})$  of $\cG_{\Phi_n}$;
\item $u_{\Phi_n'}\not=0$ for any minimizer $(u_{\Phi_n'},\Ab_{\Phi_n'})$  of $\cG_{\Phi_n'}$.
\end{enumerate}
\item If $\kappa>\sqrt{\lambda_1^{\Omega_0}(1/2)}$, then any minimizer $(u_\Phi,\Ab_\Phi)$ of $\Gf$
satisfies $u_\Phi\not=0$.
\end{enumerate}
\end{corollary}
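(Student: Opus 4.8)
The plan is to reduce everything to the spectral information in Proposition~\ref{prop:Hf} (the function $\lambda_1^{\Omega_0}(\Phi)$ attains its global maximum at the half-integers and its minimum at the integers) together with the criterion \eqref{eq:criterion}. Part~B is then immediate: since $\lambda_1^{\Omega_0}(\Phi)\le\lambda_1^{\Omega_0}(1/2)<\kappa^2$ for every $\Phi$, \eqref{eq:criterion} gives $\Gn<0$, so the trivial configuration cannot be a minimizer and hence $u_\Phi\neq0$. For the integer fluxes $\Phi_n'=n$ of Part~A~ii), periodicity gives $\lambda_1^{\Omega_0}(n)=\lambda_1^{\Omega_0}(0)$; in the genuine oscillation window $\lambda_1^{\Omega_0}(0)<\kappa^2$ the same criterion yields $\rG(\Phi_n')<0=\cG_{\Phi_n'}(0,\Fb)$, so every minimizer has $u_{\Phi_n'}\neq0$, the sharper structure $|u_{\Phi_n'}|>0$, $\Ab=\Fb$, $\jb=0$ being Theorem~\ref{thm:GL}~iii). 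The strict ordering $0<\Phi_n<\Phi_n'<\Phi_{n+1}$ is trivial.

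The heart of the matter is Part~A~i), which concerns \emph{every} critical point $(u,\Ab)$ of $\cG_{\Phi_n}$ at a half-integer flux $\Phi=\Phi_n=n-\tfrac12$, not merely minimizers, so one cannot argue by energy comparison as in Part~B. I would work directly at $\Phi_n$, using $\lambda_1^{\Omega_0}(\Phi_n)=\lambda_1^{\Omega_0}(1/2)$ by periodicity. First record the two standard a priori bounds for critical points: the maximum principle $\|u\|_{L^\infty}\le1$, and, writing $\ab=\Ab-\Fb$, the second Ginzburg--Landau equation tested against $\ab$ combined with the Poincaré-type estimate $\|\ab\|_{H^1(\Omega)}\le C\|\curl\ab\|_{L^2(\Omega)}$ (valid since $\Omega$ is simply connected and $\Div\ab=0$, $\nu\cdot\ab=0$). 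This yields $\Phi\|\curl\ab\|_{L^2(\Omega)}\le C\|\jb(u,\Phi\Ab)\|_{L^2(\Omega_0)}$.

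Next, testing the first Ginzburg--Landau equation against $u$ gives
\[
\|(-\ii\nabla-\Phi\Ab)u\|_{L^2(\Omega_0)}^2=\kappa^2\int_{\Omega_0}|u|^2(1-|u|^2)\,\dd x\le\kappa^2\|u\|_{L^2(\Omega_0)}^2,
\]
and since $|\jb(u,\Phi\Ab)|\le|u|\,|(-\ii\nabla-\Phi\Ab)u|$ with $\|u\|_{L^\infty}\le1$, the previous bound upgrades to $\Phi\|\curl\ab\|\le C\kappa\|u\|_{L^2}$, whence $\|\ab\|_{H^1}\lesssim\Phi^{-1}\kappa\|u\|_{L^2}$. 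Crucially this $\Phi^{-1}$ cancels the $\Phi^2$ arising in the comparison of $\Ab$ with $\Fb$: by Sobolev embedding and $\|u\|_{L^4}^2\le\|u\|_{L^2}$,
\[
\Phi^2\int_{\Omega_0}|\ab|^2|u|^2\,\dd x\le\Phi^2\|\ab\|_{L^4}^2\|u\|_{L^4}^2\le C'\kappa^2\|u\|_{L^2(\Omega_0)}^2,
\]
with $C'$ depending only on $\Omega_0,\Omega$ and, importantly, uniform in $n$. Writing $(-\ii\nabla-\Phi\Ab)u=(-\ii\nabla-\Phi\Fb)u-\Phi\ab u$ and using the Rayleigh quotient \eqref{eq:def-ev-Hf} gives the lower bound $\|(-\ii\nabla-\Phi\Ab)u\|^2\ge(1-\varepsilon)\lambda_1^{\Omega_0}(1/2)\|u\|^2-\tfrac{1-\varepsilon}{\varepsilon}\Phi^2\int|\ab|^2|u|^2$, so that for $u\neq0$ the two displays combine to
\[
\kappa^2\ge(1-\varepsilon)\lambda_1^{\Omega_0}(1/2)-\tfrac{1-\varepsilon}{\varepsilon}C'\kappa^2.
\]
Optimizing over $\varepsilon\in(0,1)$ defines $c_*^2:=\max_{\varepsilon}\frac{\varepsilon(1-\varepsilon)}{\varepsilon+(1-\varepsilon)C'}\in(0,1]$, and the contrapositive reads: if $\kappa<c_*\sqrt{\lambda_1^{\Omega_0}(1/2)}$ then necessarily $u=0$, which is i).

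The main obstacle is exactly the a priori control of $\ab=\Ab-\Fb$ for general critical points: unlike in Part~B one cannot assume $\Ab=\Fb$, and a crude estimate would carry an uncontrolled $\Phi^2$ from the magnetic coupling. The delicate point is that the self-consistent bound from the second Ginzburg--Landau equation produces precisely the compensating $\Phi^{-1}$, keeping the correction of order $\kappa^2\|u\|^2$ uniformly in $n$; the supporting technical inputs are the maximum principle $\|u\|_{L^\infty}\le1$ under the mixed Dirichlet--Neumann conditions and the Poincaré--type inequality on the simply connected $\Omega$.
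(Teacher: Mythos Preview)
Your argument is correct and follows the standard route that the paper itself defers to (the paper gives no proof and cites \cite[Corollary~1.5]{KP}). For Part~A~i) the decisive step is exactly the self-consistent bound on $\ab=\Ab-\Fb$ coming from the second Ginzburg--Landau equation, which produces the $\Phi^{-1}$ factor cancelling the $\Phi^2$ in the magnetic perturbation; combined with the maximum principle and the curl--div/Poincar\'e inequality on the simply connected $\Omega$, this yields $\Phi^2\int|\ab|^2|u|^2\le C'\kappa^2\|u\|_{L^2}^2$ uniformly in $n$, and the rest is the expected Rayleigh-quotient comparison at $\lambda_1^{\Omega_0}(1/2)$.

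One point worth making explicit: your ``genuine oscillation window'' caveat for ii) is in fact an additional hypothesis, namely $\kappa^2>\lambda_1^{\Omega_0}(0)$, that is \emph{not} implied by the stated upper bound $\kappa<c_*\sqrt{\lambda_1^{\Omega_0}(1/2)}$. Without it the trivial configuration is the unique minimizer of $\cG_\Phi$ for every $\Phi$ (since then $\lambda_1^{\Omega_0}(\Phi)\ge\lambda_1^{\Omega_0}(0)\ge\kappa^2$ forces $\Gf(u,\Ab)\ge0$ with equality only at $u=0$), so no oscillation occurs and ii) fails. This lower bound is also implicit in Theorem~\ref{thm:GL}~iii) and Remark~\ref{rem:trivial} (the positive solution $u_*$ there exists only when $\kappa^2>\lambda_1^{\Omega_0}(0)$), so you are right to flag it; it should be recorded as part of the hypothesis rather than silently assumed.
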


\subsection{The magnetic Laplacian with a local magnetic field}
In order to  investigate the occurrence of oscillations for the initial model in \eqref{eq:def-GL}, one has to study
\begin{equation}\label{eq:ev-H-B}
\lambda_1^\Omega(\Phi):=\inf_{\substack{u\in H^1(\Omega;\C)\\u\not=0}}\frac{\|(-\ii\nabla-\Phi\Fb)u\|^2_{L^2(\Omega)}}{\|u\|^2_{L^2(\Omega)}},
\end{equation}
 the lowest eigenvalue of the magnetic Laplacian  in $L^2(\Omega)$,
\begin{equation}\label{eq:def-op-Om}
    \Hb= (-\ii\nabla - \Phi\Fb)^2,
\end{equation}
with Neumann boundary conditions $\nu\cdot \nabla u=0$ on \(\partial\Omega\). Subject to the local magnetic field $\Phi\curl\Fb=\frac{2\pi}{|\omega|}\Phi\mathbf 1_\omega=b\mathbf 1_\omega$, this eigenvalue problem is also related to the existence of discrete spectrum for locally deformed leaky wires \cite{BE}.

Guided by Theorem~\ref{thm:GL}, we anticipate that the eigenvalue $\lambda_1^{\Omega_0}(\Phi)$, introduced in \eqref{eq:def-ev-Hf}, approximates the eigenvalue
$\lambda_1^\Omega(\Phi)$ in the limit of large $\Phi$. This can likely be obtained from \cite{HN} in the framework of norm resolvent convergence, but we give here another proof which is closer to the one of Theorem~\ref{thm:GL}.

\begin{theorem}\label{thm:ev}
As $\Phi\to+\infty$, the eigenvalues $\lambda_1^{\Omega}(\Phi)$ and $\lambda_1^{\Omega_0}(\Phi)$ introduced in \eqref{eq:ev-H-B} and \eqref{eq:def-ev-Hf} satisfy
\[
    \lambda_1^{\Omega}(\Phi) = \lambda_1^{\Omega_0}(\Phi) + o(1).
\]
\end{theorem}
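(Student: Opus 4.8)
The plan is to prove the two-sided estimate $\lambda_1^{\Omega}(\Phi) = \lambda_1^{\Omega_0}(\Phi) + o(1)$ by establishing the upper and lower bounds separately, both relying on the heuristic that the strong magnetic field in $\omega$ forces any low-energy eigenfunction to be exponentially small there. The upper bound $\lambda_1^{\Omega}(\Phi)\leq \lambda_1^{\Omega_0}(\Phi)+o(1)$ is the easy direction: given a ground state $v$ for $\lambda_1^{\Omega_0}(\Phi)$, extend it by zero to all of $\Omega$. Since $v\in\cX(\Omega_0)$ vanishes on $\partial\omega$, the extension $\widetilde v$ lies in $H^1(\Omega;\C)$ (as noted after \eqref{eq:def-space0*}), and it is an admissible test function in \eqref{eq:ev-H-B}. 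The Rayleigh quotients agree exactly because $\widetilde v\equiv 0$ on $\omega$, so in fact $\lambda_1^{\Omega}(\Phi)\leq \lambda_1^{\Omega_0}(\Phi)$ with no error term at all.

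\medskip

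The lower bound $\lambda_1^{\Omega}(\Phi)\geq \lambda_1^{\Omega_0}(\Phi)-o(1)$ is the substantive direction. First I would invoke gauge invariance via \eqref{eq:prop-U1*} to reduce to $0\leq\Phi<1$ plus a shift, or rather to isolate the dependence on the large parameter: writing $\Phi=n+\Phi_0$ the factor multiplying $\Fb$ inside $\omega$ is genuinely large since $\curl(\Phi\Fb)=b\mathbf 1_\omega$ with $b\sim \Phi$. The strategy is an Agmon-type estimate combined with the fact that the magnetic Laplacian on a bounded domain with a uniform field of strength $b$ has lowest Neumann eigenvalue $\sim \Theta_0 b$ and lowest Dirichlet-type eigenvalue $\sim b$, both tending to $+\infty$. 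Let $u$ be a normalized ground state of $\Hb$ with eigenvalue $\lambda_1^{\Omega}(\Phi)$, which stays bounded by the upper bound just proved. I would show that $\int_{\omega}|u|^2\dd x = o(1)$ and $\int_{\omega}|(-\ii\nabla-\Phi\Fb)u|^2\dd x = o(1)$: the point is that any mass of $u$ inside $\omega$ costs energy of order $b\to\infty$, which is incompatible with the bounded eigenvalue. This is made quantitative through an IMS localization formula with a partition of unity $\chi_1^2+\chi_2^2=1$ separating a neighborhood of $\omega$ from the rest of $\Omega_0$, paying the usual $\|\nabla\chi_j\|_\infty^2$ error.

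\medskip

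With the mass and energy inside (and near $\partial\omega$) shown to be negligible, I would construct a comparison function in $\cX(\Omega_0)$ from $u$. The cleanest route is to multiply $u$ by a cutoff $\chi$ that vanishes on $\overline{\omega}$ and equals $1$ outside a thin collar of $\partial\omega$, producing $\chi u\in\cX(\Omega_0)$. Plugging $\chi u$ into the Rayleigh quotient \eqref{eq:def-ev-Hf} yields
\[
\lambda_1^{\Omega_0}(\Phi)\;\leq\;\frac{\|(-\ii\nabla-\Phi\Fb)(\chi u)\|^2_{L^2(\Omega_0)}}{\|\chi u\|^2_{L^2(\Omega_0)}}\;=\;\lambda_1^{\Omega}(\Phi)+o(1),
\]
where the numerator differs from $\lambda_1^{\Omega}(\Phi)$ by the localization error $\int|\nabla\chi|^2|u|^2$ and cross terms, all controlled by the smallness of $u$ and its magnetic gradient on the collar, and the denominator is $1-o(1)$ since almost all the mass lies in $\Omega_0$.

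\medskip

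The main obstacle is making the collar estimate quantitative: the cutoff $\chi$ must transition to zero across a layer whose width I must choose to balance the localization cost $\|\nabla\chi\|_\infty^2\int_{\mathrm{collar}}|u|^2$ against the concentration rate of $u$ near $\partial\omega$. A fixed-width collar gives a bounded factor $\|\nabla\chi\|_\infty^2$ times a quantity that I must argue is $o(1)$; this is exactly where an Agmon/tunneling estimate showing exponential decay of $u$ into $\omega$ at rate $\sqrt{b}$ is needed, so that the mass in any collar of the boundary is suppressed faster than the localization cost grows. I would therefore prove an a priori exponential-decay bound $\int_{\omega}e^{2\alpha\sqrt{b}\,\mathrm{dist}(x,\partial\omega)}|u|^2\dd x=O(1)$ for some $\alpha>0$ via the standard Agmon argument, from which the collar smallness and hence the whole lower bound follow. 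This decay estimate, rather than the IMS bookkeeping, is the technical heart of the proof.
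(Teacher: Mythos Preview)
Your upper bound is exactly the paper's non-asymptotic inequality \eqref{eq:comparison}: extending a ground state of $\Hf$ by zero to $\omega$ gives $\lambda_1^\Omega(\Phi)\le\lambda_1^{\Omega_0}(\Phi)$ with no error term.

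For the lower bound, your direct cutoff strategy hits a mismatch that you have not resolved. Your cutoff $\chi$ vanishes on $\overline\omega$ and transitions to $1$ across a collar lying \emph{inside $\Omega_0$}; the localization error is therefore $\int_{\mathrm{collar}\subset\Omega_0}|\nabla\chi|^2|u|^2$. But the Agmon estimate you propose, $\int_\omega e^{2\alpha\sqrt b\,\mathrm{dist}(x,\partial\omega)}|u|^2=O(1)$, lives entirely in $\omega$ and says nothing about $|u|$ on the $\Omega_0$ side of $\partial\omega$, where the field vanishes and there is no decay mechanism. From $\|\,|u|\,\|_{H^1(\Omega)}=O(1)$ and $\|u\|_{L^2(\omega)}^2=O(\Phi^{-1})$ one can extract $\|u\|_{L^2(\partial\omega)}^2=O(\Phi^{-1/2})$ via a trace interpolation on the $\omega$ side, but propagating this into a collar of width $\delta$ in $\Omega_0$ yields $\int_{\mathrm{collar}}|u|^2\lesssim \delta\Phi^{-1/2}+\delta^2\|\nabla|u|\|_{L^2(\mathrm{collar})}^2$, and after dividing by $\delta^2$ the second term is only $O(1)$. (Your side claim $\int_\omega|(-\ii\nabla-\Phi\Fb)u|^2=o(1)$ is also unjustified: the total magnetic kinetic energy is $\lambda_1^\Omega(\Phi)=O(1)$, and an $O(1)$ share of it can sit in the boundary layer inside $\omega$.)

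The paper sidesteps the collar issue by a compactness argument rather than a cutoff. After the gauge shift $v_\Phi=U_{\lfloor\Phi\rfloor}^{-1}u_\Phi$ on $\Omega_0$, the operator has bounded coefficients, so elliptic estimates give uniform $H^2$ bounds on every $\Omega_\varepsilon=\{\mathrm{dist}(x,\partial\omega)>\varepsilon\}$. Along a subsequence with $\{\Phi_k\}\to\Phi_*$, one has $v_{\Phi_k}\to v_*$ strongly in $H^1(\Omega_\varepsilon)$ while $|u_{\Phi_k}|\to\rho_*$ in $L^2(\Omega)$ with $\rho_*\in H^1(\Omega)$ and $\rho_*|_\omega=0$. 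The trace of $\rho_*$ on $\partial\omega$ therefore vanishes, so $v_*\in\cX(\Omega_0)$ \emph{automatically}; no cutoff is ever introduced. Monotone convergence over $\Omega_\varepsilon\nearrow\Omega_0$ then gives $\limsup_k\lambda_1^\Omega(\Phi_k)\ge\lambda_1^{\Omega_0}(\Phi_*)$, and periodicity plus continuity of $\lambda_1^{\Omega_0}(\cdot)$ closes a contradiction argument. Your approach could plausibly be repaired (for instance by cutting off on the $\omega$ side and comparing with the operator on a slightly enlarged annulus $\Omega\setminus\overline{\omega_\delta}$, then performing a domain perturbation), but as written the decay estimate and the localization error live on opposite sides of $\partial\omega$.
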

Notice that, by comparing with the Dirichlet problem in $\Omega_0$, we have the non-asymptotic bound,
\begin{equation}\label{eq:comparison}
\lambda_1^{\Omega}(\Phi)\leq \lambda_1^{\Omega_0}(\Phi),\quad \forall\, \Phi>0.
\end{equation}
To prove Theorem~\ref{thm:ev}, we will establish the matching asymptotic lower bound,
\begin{equation}\label{eq:comparison*}
\lambda_1^{\Omega}(\Phi)\geq \lambda_1^{\Omega_0}(\Phi)+o(1).
\end{equation}
This will be done in Section~\ref{sec:eigenvalueapproximation}. As a direct consequence of Theorem~\ref{thm:ev}, we get that the minimizers of the functional $\Ef$ in \eqref{eq:def-GL} undergo indefinite oscillations between the normal and superconducting phases.
\begin{corollary}\label{cor:oscillations2}
Suppose that $0<\kappa<c_*\sqrt{\lambda_1^{\Omega_0}(1/2)}$, where $c_*\in(0,1)$ is a constant independent of $\kappa$ and $\Phi$. There exist two sequences $(\Phi_n)$ and $(\Phi_n')$ such that, for all $n\in \N$, we have $0<\Phi_n<\Phi_n'<\Phi_{n+1}$ and
\begin{enumerate}[\rm i)]
\item $\psi_{\Phi_n}=0$ for any critical point $(\psi_{\Phi_n},\Ab_{\Phi_n})$ of $\cE_{\Phi_n}$;
\item $\psi_{\Phi_n'}\not=0$ for any minimizer $(\psi_{\Phi_n'},\Ab_{\Phi_n'})$ of $\cE_{\Phi_n'}$.
\end{enumerate}
Moreover, if $\kappa>\sqrt{\lambda_1^{\Omega_0}(1/2)}$, then for all $\Phi>0$, any minimizer $(\psi_\Phi,\Ab_\Phi)$ of $\Ef$ satisfies $\psi_\Phi\not=0$.
\end{corollary}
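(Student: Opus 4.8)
The plan is to transfer the oscillation already established for the effective functional in Corollary~\ref{cor:oscillations1} to the original functional $\cE_\Phi$, using two ingredients from the preceding results: the energy approximation $\En=\Gn+o(1)$ of Theorem~\ref{thm:GL} and the eigenvalue approximation $\lambda_1^{\Omega}(\Phi)=\lambda_1^{\Omega_0}(\Phi)+o(1)$ of Theorem~\ref{thm:ev}, together with the oscillation of $\Phi\mapsto\lambda_1^{\Omega_0}(\Phi)$ from Proposition~\ref{prop:Hf} (maximum $\lambda_1^{\Omega_0}(1/2)$ at every half-integer, minimum $\lambda_1^{\Omega_0}(0)$ at every integer). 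I would take $\Phi_n=n-\tfrac12$ and $\Phi_n'=n$, reindexed to start from a large value so that every asymptotic statement below applies; then $0<\Phi_n<\Phi_n'<\Phi_{n+1}$. The last assertion is the quickest: when $\kappa>\sqrt{\lambda_1^{\Omega_0}(1/2)}$, the non-asymptotic comparison \eqref{eq:comparison} and the maximality of $\lambda_1^{\Omega_0}$ at $1/2$ give $\lambda_1^{\Omega}(\Phi)\leq\lambda_1^{\Omega_0}(\Phi)\leq\lambda_1^{\Omega_0}(1/2)<\kappa^2$ for every $\Phi>0$, and testing $\cE_\Phi$ with $(tv,\Fb)$ for a ground state $v$ of $\lambda_1^{\Omega}(\Phi)$ and small $t>0$ yields $\En<0$, so no minimizer is trivial.

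For part~ii) I would argue at the level of energies. Corollary~\ref{cor:oscillations1} (equivalently Theorem~\ref{thm:GL}~iii)) provides a non-trivial minimizer of $\cG_\Phi$ at integer flux, whence $\Gn<\cG_\Phi(0,\Fb)=0$ there; by the periodicity in Theorem~\ref{thm:GL}~i) this common value $\mathrm G(0)$ is a fixed negative constant. Theorem~\ref{thm:GL} then gives $\mathrm E(\Phi_n')=\mathrm G(0)+o(1)<0$ for $n$ large. Since any configuration with $\psi=0$ satisfies $\cE_\Phi(0,\Ab)=\Phi^2\int_\Omega|\curl(\Ab-\Fb)|^2\geq0$, it cannot be a minimizer once $\En<0$; hence every minimizer at $\Phi_n'$ has $\psi_{\Phi_n'}\neq0$.

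Part~i) is the substantive one and where I expect the real work. Given any critical point $(\psi,\Ab)$ of $\cE_{\Phi_n}$, I would first invoke the maximum-principle bound $\|\psi\|_{L^\infty}\leq1$. Testing the first Ginzburg--Landau equation with $\psi$ gives $\|(-\ii\nabla-\Phi\Ab)\psi\|_{L^2}^2=\kappa^2(\|\psi\|_{L^2}^2-\|\psi\|_{L^4}^4)\leq\kappa^2\|\psi\|_{L^2}^2$, while testing the magnetic equation with the admissible variation $\Ab-\Fb$ and using the div--curl Poincar\'e inequality (valid by \eqref{eq:prop-A}) together with $\|\jb\|_{L^2}\leq\kappa\|\psi\|_{L^2}$ would yield the self-consistent bound
\[
\|\curl(\Ab-\Fb)\|_{L^2}\leq \frac{C\kappa}{\Phi}\,\|\psi\|_{L^2},\qquad\text{so}\qquad \|\Ab-\Fb\|_{L^4}\leq \frac{C'\kappa}{\Phi}\,\|\psi\|_{L^2}.
\]
Substituting $(-\ii\nabla-\Phi\Fb)\psi=(-\ii\nabla-\Phi\Ab)\psi+\Phi(\Ab-\Fb)\psi$, the error $\Phi^2\|(\Ab-\Fb)\psi\|_{L^2}^2\leq (C'\kappa)^2\|\psi\|_{L^2}^2\|\psi\|_{L^4}^2\leq C''\kappa^2\|\psi\|_{L^2}^2$ is absorbed using $\|\psi\|_{L^4}^2\leq\|\psi\|_{L^2}\leq|\Omega|^{1/2}$ (again from $|\psi|\leq1$), with $C''$ purely geometric. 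I would thus arrive, for a geometric $c_*\in(0,1)$, at
\[
\lambda_1^{\Omega}(\Phi_n)\,\|\psi\|_{L^2}^2\leq\|(-\ii\nabla-\Phi\Fb)\psi\|_{L^2}^2\leq c_*^{-2}\kappa^2\|\psi\|_{L^2}^2,
\]
so that $\psi\neq0$ would force $\lambda_1^{\Omega}(\Phi_n)\leq c_*^{-2}\kappa^2$. Since $\lambda_1^{\Omega}(\Phi_n)=\lambda_1^{\Omega_0}(1/2)+o(1)$ by Theorem~\ref{thm:ev} and Proposition~\ref{prop:Hf}, the hypothesis $\kappa^2<c_*^2\lambda_1^{\Omega_0}(1/2)$ would produce, for $n$ large, the contradiction $\lambda_1^{\Omega_0}(1/2)+o(1)\leq c_*^{-2}\kappa^2<\lambda_1^{\Omega_0}(1/2)$, forcing $\psi_{\Phi_n}=0$.

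The step I expect to be the main obstacle is precisely the substitution of $\Phi\Fb$ for $\Phi\Ab$ in the magnetic Rayleigh quotient for critical points: a naive estimate loses a full power of $\Phi$, since only $\|\Ab-\Fb\|_{L^4}=O(\Phi^{-1})$ is available, and one must combine the self-consistent $\Phi^{-1}$-bound on $\curl(\Ab-\Fb)$ with the pointwise bound $|\psi|\leq1$ and the exact cubic nonlinearity to keep the error strictly proportional to $\kappa^2$ and $\kappa$-independent. This is exactly what produces, and explains, the geometric constant $c_*\in(0,1)$ that fixes the admissible window for $\kappa$.
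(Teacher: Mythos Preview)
Your proposal is correct and follows essentially the same route as the paper, whose proof is just a reference to the ``standard argument'' of \cite[Proof of Thm.~1.7(2)]{KP} combined with Theorem~\ref{thm:ev} and Proposition~\ref{prop:Hf}; your treatment of part~i) (the self-consistent bound $\|\curl(\Ab-\Fb)\|_2\leq C\kappa\Phi^{-1}\|\psi\|_2$ for critical points, the substitution $\Phi\Ab\to\Phi\Fb$ absorbing the error into a geometric $c_*^{-2}$, and the contradiction via $\lambda_1^{\Omega}(n-\tfrac12)\to\lambda_1^{\Omega_0}(\tfrac12)$) is exactly that argument made explicit. The only cosmetic difference is in part~ii): the paper's sketch goes through the eigenvalue criterion $\lambda_1^{\Omega}(\Phi_n')<\kappa^2$ directly, while you pass instead through the energy asymptotics $\En=\Gn+o(1)$ and the negativity of $\rG(0)$; both routes are valid and, for $\Phi_n'=n$, both implicitly use $\kappa^2>\lambda_1^{\Omega_0}(0)$.
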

To construct the sequences in the above corollary, we note that a minimizer of $\Ef$ is not identically zero whenever $\lambda_1^\Omega(\Phi)<\kappa^2$, and we use Proposition~\ref{prop:Hf} and Theorem~\ref{thm:ev} to construct $(\Phi_n)$ and $(\Phi_n')$ such that $\lambda_1^\Omega(\Phi_n)>\kappa^2$ and $\lambda_1^{\Omega}(\Phi_n')<\kappa^2$.

\section{Convergence to the effective functional}\label{sec:funcconv}

In this section we prove Theorem~\ref{thm:GL} and Corollary~\ref{cor:oscillations1}.
In the various calculations, we will encounter, in addition to the flux $\Phi$, the integer and fractional parts
\begin{equation}\label{eq:def-flux*}
\lfloor\Phi\rfloor=\sup\{m\in\Z\colon m\leq \Phi\},\quad \{\Phi\}=\Phi-\lfloor\Phi\rfloor.
\end{equation}
We start by proving that the effective ground state energy is periodic with respect to $\Phi$.  For any $\Phi>0$ and $n\geq 0$, we define the transformation
\[
    \cL_{\Phi,n}(u,\Ab)
    =
    \left(
    U_nu, \frac{\Phi}{\Phi+n}\Ab+\frac{n}{\Phi+n}\Fb
    \right).
\]

\begin{proposition}\label{prop:GL-eff}
For all $\Phi>0$, we have
\[
    \rG(\Phi+1)=\Gn,
\]
where  $\rG(\cdot)$ is introduced in \eqref{eq:def-energy0}. Moreover, with $n=\lfloor \Phi\rfloor$ and $\Phi_0=\{\Phi\}$, we have
\begin{multline*}
    (u_*,\Ab_*)\text{ is a minimizer of }\cG_{\Phi_0}
    \iff \\
    (u,\Ab)
    =
    \cL_{\Phi_0,n}(u_*,\Ab_*)\text{ is a minimizer of }\Gf.
\end{multline*}
\end{proposition}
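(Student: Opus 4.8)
The plan is to prove the two claims together by exhibiting $\cL_{\Phi_0,n}$ as an energy-preserving bijection between the configuration space and then specializing. First I would verify that $\cL_{\Phi,n}$ maps $\cH_0$ into itself: if $(u,\Ab)\in\cH_0$ then $u=0$ on $\omega$, and since $U_n$ is the globally defined $C^1$ unimodular function from \eqref{eq:def-U-1}, the product $U_n u$ still vanishes on $\omega$; for the gauge component $\widetilde\Ab:=\frac{\Phi}{\Phi+n}\Ab+\frac{n}{\Phi+n}\Fb$ I would note that it is a convex combination of $\Ab$ and $\Fb$, both of which are divergence-free on $\Omega$ with vanishing normal trace on $\partial\Omega$ (for $\Fb$ this is exactly \eqref{eq:prop-A}), so these linear constraints defining $\cH$ are preserved.

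The heart of the argument is the term-by-term computation of $\cG_{\Phi+n}$ evaluated at $\cL_{\Phi,n}(u,\Ab)$. The key algebraic identity is \eqref{eq:prop-U1*}, which I would apply with $\Phi$ there replaced by $\Phi+n$: using $\nabla U_n=\ii n U_n\Fb$ from \eqref{eq:prop-U1} one computes
\[
(-\ii\nabla-(\Phi+n)\widetilde\Ab)(U_n u)
=U_n\bigl(-\ii\nabla-(\Phi+n)\widetilde\Ab+n\Fb\bigr)u,
\]
and the definition of $\widetilde\Ab$ gives $(\Phi+n)\widetilde\Ab-n\Fb=\Phi\Ab$, so the magnetic gradient becomes $U_n(-\ii\nabla-\Phi\Ab)u$. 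Since $|U_n|=1$, the kinetic term $\int_{\Omega_0}|(-\ii\nabla-(\Phi+n)\widetilde\Ab)(U_n u)|^2$ equals $\int_{\Omega_0}|(-\ii\nabla-\Phi\Ab)u|^2$, and the potential terms $\int_{\Omega_0}(-\kappa^2|u|^2+\tfrac{\kappa^2}{2}|u|^4)$ are manifestly invariant because they depend only on $|U_n u|=|u|$. For the field energy I would check $\curl(\widetilde\Ab-\Fb)=\frac{\Phi}{\Phi+n}\curl(\Ab-\Fb)$, so the factor $(\Phi+n)^2$ in front of $\int_\Omega|\curl(\widetilde\Ab-\Fb)|^2$ exactly cancels with $\bigl(\frac{\Phi}{\Phi+n}\bigr)^2$ to reproduce $\Phi^2\int_\Omega|\curl(\Ab-\Fb)|^2$. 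Assembling these pieces yields the pointwise energy identity $\cG_{\Phi+n}(\cL_{\Phi,n}(u,\Ab))=\cG_\Phi(u,\Ab)$.

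With the energy identity in hand, the bijectivity follows by constructing the inverse $\cL_{\Phi,n}^{-1}$ explicitly: set $v=U_n^{-1}u'$ and $\Ab=\frac{\Phi+n}{\Phi}\Ab'-\frac{n}{\Phi}\Fb$, and verify by the same computation (or by direct substitution) that this recovers the original configuration and again maps $\cH_0$ to $\cH_0$. Taking the infimum over $\cH_0$ on both sides of the energy identity gives $\rG(\Phi+n)=\rG(\Phi)$ for every integer $n\geq0$; applied with $n=1$ this is the periodicity $\rG(\Phi+1)=\rG(\Phi)$, and iterating (or combining the identity for $\Phi_0\in[0,1)$ with $n=\lfloor\Phi\rfloor$) gives $\rG(\Phi)=\rG(\Phi_0)$. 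Finally, the minimizer correspondence is immediate: because $\cL_{\Phi_0,n}$ is an energy-preserving bijection from $\cH_0$ onto $\cH_0$, it carries the set of minimizers of $\cG_{\Phi_0}$ bijectively onto the set of minimizers of $\cG_{\Phi_0+n}=\cG_\Phi$, which is exactly the stated equivalence.

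I expect the main obstacle to be purely bookkeeping rather than conceptual: one must track the two distinct roles of $\Phi$ (the flux appearing both in the functional subscript and in the gauge rescaling coefficients $\frac{\Phi}{\Phi+n}$) and make sure the shift $\Phi\mapsto\Phi+n$ in the subscript is matched correctly by the $n\Fb$ term produced from $\nabla U_n$. The regularity checks — that $U_n u\in H^1$ when $u\in H^1$, using the $C^1$ smoothness of $U_n$ and boundedness of $\Fb$ on the compact closure $\overline{\Omega_0}$ — are routine and can be stated briefly.
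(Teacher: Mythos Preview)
Your proposal is correct and follows essentially the same approach as the paper: the paper's proof establishes the energy identity $\cG_{\Phi+1}(\cL_{\Phi,1}(u,\Ab))=\cG_\Phi(u,\Ab)$ via \eqref{eq:prop-U1*} and then iterates, whereas you carry out the same gauge computation directly for general $n$ with more explicit term-by-term verification (kinetic, potential, and field-energy terms) and an explicit inverse map. One small caution: your bijectivity claim for $\cL_{\Phi_0,n}$ requires $\Phi_0>0$ (your inverse formula divides by $\Phi$), which matches the paper's convention that $\cL_{\Phi,n}$ is only introduced for $\Phi>0$; the stated ``iff'' for minimizers then follows already from the energy identity together with $\rG(\Phi_0)=\rG(\Phi)$, so you need not insist on bijectivity in the borderline case.
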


\begin{proof}
Let $(u,\Ab)\in \cH_0$, where $\cH_0$ is the space introduced in \eqref{eq:def-space0}. With $\tilde u=U_1u$ and $\tilde\Ab=\frac{\Phi}{\Phi+1}\Ab+\frac{1}{\Phi+1}\Fb$,  the configuration $\bigl(\tilde u,\tilde\Ab)\in \cH_0$, and thanks to \eqref{eq:prop-U1*}, we have
\[\cG_{\Phi+1}(\tilde u,\tilde\Ab)=\Gf(u,\Ab). \]
Minimizing over $(u,\Ab)\in\cH_0$, we get  $\rG\bigl(\Phi+1\bigr)=\Gn$, and  if $(u,\Ab)$ is a minimizer of $\Gf$, then $(\tilde u,\tilde\Ab)$ is a minimizer of $\cG_{\Phi+1}$. By iteration, we cover the case where $\Phi=\Phi_0+n$ and $n\in \N$.
\end{proof}
\begin{remark}\label{rem:trivial}
Suppose that $\Phi\in \N$. Then, by Proposition~\ref{prop:GL-eff},  the set of minimizers of $\Gf$ is $\{(cU_\Phi u_*,\Fb)\colon c\in\C\text{ with }|c|=1\}$,
where $u_*$ is the unique positive solution of
\[
    -\Delta u_*=\kappa^2(1-u^2_*)u_*\text{ on }\Omega_0,
    \quad
    u_*=0\text{ on }\partial\omega,
    \quad
    \nu\cdot\nabla u_*=0\text{ on }\partial\Omega.
\]
Consequently, for a minimizer $(u_\Phi,\Fb)$ of $\Gf$, we have
\[
    |u_\Phi|>0,
    \quad
    \jb(u_\Phi,\Fb)
    =
    \re \langle u_\Phi,(-\ii\nabla -\Phi\Fb)u_\Phi\rangle_{\C}
    = 0.
\]
In fact,  $u_\Phi=cU_\Phi u_*$, with $|c|=1$ and $u_*$ real-valued, and by \eqref{eq:def-U-1},
\[\langle u_\Phi,(-\ii\nabla -\Phi\Fb)u_\Phi\rangle_{\C}=\langle u_*,(-\ii\nabla -\Phi \Fb +\Phi \Fb)u_*\rangle_{\C}=
\ii u_*\nabla u_*.\]
\end{remark}

We  next give a non-asymptotic upper bound on the ground state energy $\En$.

\begin{proposition}\label{prop:ub}
Let $\En$ and $\Gn$ be the ground state energies introduced in \eqref{eq:def-energy} and \eqref{eq:def-energy0} respectively. Then, for all $\Phi\geq 0$, we have
\[ \En\leq \Gn\leq 0.\]
\end{proposition}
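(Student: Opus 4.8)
The plan is to prove the two inequalities separately, and both are essentially bookkeeping once one observes that $\Ef$ and $\Gf$ coincide on the smaller configuration space $\cH_0$. For the right-hand bound $\Gn\leq 0$, I would simply test $\Gf$ against the trivial configuration $(0,\Fb)$. One first checks $(0,\Fb)\in\cH_0$: since $\phi\in H^2(\Omega)$ by elliptic regularity (as $\mathbf 1_\omega\in L^2(\Omega)$), we get $\Fb\in H^1(\Omega;\R^2)$, and $\Div\Fb=0$ together with $\nu\cdot\Fb=0$ on $\partial\Omega$ hold by \eqref{eq:prop-A}; the constraint $u=0$ on $\omega$ is trivially satisfied. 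Plugging in, the first integral vanishes because $u\equiv 0$, and the second vanishes because $\curl(\Fb-\Fb)=0$, so $\Gf(0,\Fb)=0$ and hence $\Gn\leq 0$.

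For the left-hand bound $\En\leq\Gn$, the key step is the identity $\Ef(u,\Ab)=\Gf(u,\Ab)$ for every $(u,\Ab)\in\cH_0$. Here I would use that a function $u\in H^1(\Omega;\C)$ which vanishes on the open set $\omega$ also has $\nabla u=0$ almost everywhere on $\omega$; consequently $(-\ii\nabla-\Phi\Ab)u=0$ on $\omega$, and $|u|^2=|u|^4=0$ on $\omega$ as well. Therefore the three matter terms in $\Ef$, although written as integrals over $\Omega$, have integrands supported in $\Omega_0=\Omega\setminus\overline\omega$, so they reduce to the corresponding integrals over $\Omega_0$ appearing in $\Gf$. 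The magnetic term $\Phi^2\int_\Omega|\curl(\Ab-\Fb)|^2\dd x$ is already identical in both functionals. This yields $\Ef(u,\Ab)=\Gf(u,\Ab)$ on $\cH_0$.

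With these two ingredients the conclusion is immediate: since $\cH_0\subset\cH$, I can restrict the infimum defining $\En$ to $\cH_0$ and use the identity above,
\[
\En=\inf_{\cH}\Ef\leq\inf_{\cH_0}\Ef=\inf_{\cH_0}\Gf=\Gn,
\]
which combined with $\Gn\leq 0$ gives the stated chain $\En\leq\Gn\leq 0$. There is no genuine obstacle in this argument; the only point requiring a word of justification is the almost-everywhere vanishing of $\nabla u$ on the open set $\omega$ where $u=0$, which is a standard property of Sobolev functions and is what makes the restriction of $\Ef$ to $\cH_0$ collapse exactly onto $\Gf$.
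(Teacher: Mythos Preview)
Your proof is correct and follows the same approach as the paper: test $(0,\Fb)$ for the upper bound $\Gn\leq 0$, and use $\cH_0\subset\cH$ together with the identity $\Ef=\Gf$ on $\cH_0$ for the lower bound. You supply more justification (regularity of $\Fb$, vanishing of $\nabla u$ on $\omega$) and work directly with the infimum rather than invoking a minimizer of $\Gf$, but the argument is essentially identical to the paper's.
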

\begin{proof}
Firstly,  $\Gn\leq \Gf(0,\Fb)=0$. Secondly, letting $(u,\Ab)\in\cH_0$ be a minimizer of $\Gn$,  we observe that $(u,\Ab)\in\cH$ and  $\En\leq \Ef(u,\Ab)=\Gf(u,\Ab)=\Gn$.
\end{proof}
To establish an asymptotic lower bound on $\En$,  we collect below some known  estimates on the minimizing configurations  of
the functional $\Ef$ (see \cite[Prop. 10.3.1 \& Lem. 10.3.2]{FH-b}), where $\|\cdot\|_p$ denotes the standard norm on $L^p(\Omega)$.
\begin{proposition}\label{prop:est-min}
There exists a positive constant $C$ such that, if $\Phi>0$ and $(\psi_\Phi,\Ab_\Phi)\in\cH$ is a minimizer  of $\Ef$,  then
\[\begin{gathered}
\|\psi\|_\infty\leq 1,\quad \|(-\ii\nabla-\Phi\Ab)\psi\|_2\leq \kappa\|\psi\|_2,\quad \|\curl(\Ab-\Fb)\|_2\leq \frac{\kappa}{\Phi}\|\psi\|_2,
\end{gathered}\]
and
\begin{equation}\label{eq:curl-div}
\|\Ab-\Fb\|_{H^2(\Omega)}\leq \frac{C\kappa}{\Phi}\|\psi\|_2.
\end{equation}
\end{proposition}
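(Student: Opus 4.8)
The plan is to recover all four bounds from the two Euler--Lagrange equations of $\Ef$ together with the elementary inequality $\En\le0$ supplied by Proposition~\ref{prop:ub}. First I would record the variational equations satisfied by a minimizer $(\psi,\Ab)\in\cH$. Varying $\psi$ in \eqref{eq:def-GL} gives the first Ginzburg--Landau equation
\[
(-\ii\nabla-\Phi\Ab)^2\psi=\kappa^2(1-|\psi|^2)\psi\ \text{ in }\Omega,\qquad \nu\cdot(-\ii\nabla-\Phi\Ab)\psi=0\ \text{ on }\partial\Omega,
\]
and varying $\Ab$ in the admissible class (divergence free with vanishing normal trace) gives the weak second equation
\[
\Phi^2\int_\Omega\curl(\Ab-\Fb)\,\curl\ab\,\dd x=\Phi\int_\Omega\jb(\psi,\Ab)\cdot\ab\,\dd x
\]
for all such $\ab$. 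Writing $\mathbf w:=\Ab-\Fb$ and using $\Div\Ab=\Div\Fb=0$ and $\nu\cdot\Ab=\nu\cdot\Fb=0$ from \eqref{eq:prop-A}, the field $\mathbf w$ is itself admissible; and since $-\Delta\mathbf v=\nabla^\perp\curl\mathbf v$ for divergence-free $\mathbf v$, the second equation is equivalent to the strong form $-\Delta\mathbf w=\Phi^{-1}\jb(\psi,\Ab)$.

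For the bound $\|\psi\|_\infty\le1$ I would argue by the maximum principle for $|\psi|$. Kato's diamagnetic inequality gives, weakly on $\{\psi\neq0\}$,
\[
-\Delta|\psi|\le\re\!\Bigl(\tfrac{\bar\psi}{|\psi|}(-\ii\nabla-\Phi\Ab)^2\psi\Bigr)=\kappa^2(1-|\psi|^2)|\psi|,
\]
so $|\psi|$ is subharmonic on the open set $\{|\psi|>1\}$; together with the Neumann condition on $\partial\Omega$ this forces that set to be empty. A more robust alternative, better suited to a mere minimizer, is to truncate: replacing $\psi$ by $\psi\min(1,|\psi|^{-1})$ preserves the phase, does not increase the magnetic kinetic energy, and strictly lowers the potential part when $\|\psi\|_\infty>1$, contradicting minimality.

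Given $\|\psi\|_\infty\le1$, the remaining bounds are short. Testing the first equation against $\psi$ yields the kinetic identity $\|(-\ii\nabla-\Phi\Ab)\psi\|_2^2=\kappa^2\int_\Omega(1-|\psi|^2)|\psi|^2\,\dd x\le\kappa^2\|\psi\|_2^2$, which is the second asserted estimate and, via \eqref{eq:def-supercurrent}, also gives $\|\jb(\psi,\Ab)\|_2\le\|\psi\|_\infty\|(-\ii\nabla-\Phi\Ab)\psi\|_2\le\kappa\|\psi\|_2$. For the field bound I would feed the kinetic identity back into \eqref{eq:def-GL} to obtain the clean algebraic identity $\Ef(\psi,\Ab)=-\tfrac{\kappa^2}{2}\|\psi\|_4^4+\Phi^2\|\curl\mathbf w\|_2^2$; since $\Ef(\psi,\Ab)=\En\le0$ by Proposition~\ref{prop:ub} and $\|\psi\|_4^4\le\|\psi\|_\infty^2\|\psi\|_2^2\le\|\psi\|_2^2$, this yields $\Phi^2\|\curl\mathbf w\|_2^2\le\tfrac{\kappa^2}{2}\|\psi\|_2^2$, hence the third estimate (even with a slightly better constant). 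Finally, for \eqref{eq:curl-div} I would apply elliptic regularity to $-\Delta\mathbf w=\Phi^{-1}\jb(\psi,\Ab)$ under the side conditions $\Div\mathbf w=0$ and $\nu\cdot\mathbf w=0$; because $\Omega$ is simply connected one may use the stream function $g$ with $\mathbf w=\nabla^\perp g$ and $g=0$ on $\partial\Omega$, and then $\|\nabla\curl\mathbf w\|_2=\|\Delta\mathbf w\|_2=\Phi^{-1}\|\jb(\psi,\Ab)\|_2\le\kappa\Phi^{-1}\|\psi\|_2$ combined with the third estimate gives $\|\curl\mathbf w\|_{H^1(\Omega)}\le C\kappa\Phi^{-1}\|\psi\|_2$, so that Dirichlet elliptic estimates for $g$ bound $\|\mathbf w\|_{H^2(\Omega)}\le C\|\curl\mathbf w\|_{H^1(\Omega)}\le C\kappa\Phi^{-1}\|\psi\|_2$, which is \eqref{eq:curl-div}.

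I expect the $L^\infty$ bound to be the main obstacle: the maximum-principle route must accommodate both the Neumann condition on $\partial\Omega$ and the degeneration of Kato's inequality on the nodal set $\{\psi=0\}$, which is why the energy truncation is the safer implementation. The field estimates, by contrast, are essentially forced once $\|\psi\|_\infty\le1$ and $\En\le0$ are in hand, the only structural inputs being the divergence-free/normal-trace gauge and the simple connectedness of $\Omega$ used in the elliptic step.
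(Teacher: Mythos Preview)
Your proposal is correct and matches the paper's treatment: the paper simply cites \cite[Prop.~10.3.1 \& Lem.~10.3.2]{FH-b} for the first three bounds (whose standard proofs are exactly the truncation/maximum-principle, the kinetic identity from testing the first GL equation, and the energy inequality $\En\le0$ that you spell out), and then derives \eqref{eq:curl-div} from the curl--div inequality $\|\ab\|_{H^2(\Omega)}\le C_\Omega\|\curl\ab\|_{H^1(\Omega)}$ together with the second GL equation in the form \eqref{eq:curl-div*}. Your stream-function argument for the $H^2$ bound is the same estimate in disguise (in a simply connected domain, the curl--div inequality is precisely the Dirichlet $H^3$ estimate for the scalar stream function $g$ with $\mathbf w=\nabla^\perp g$ and $\curl\mathbf w=\Delta g$), so the two routes coincide up to packaging. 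The only cosmetic difference is that you extract the curl bound from the sharpened identity $\Ef(\psi,\Ab)=-\tfrac{\kappa^2}{2}\|\psi\|_4^4+\Phi^2\|\curl(\Ab-\Fb)\|_2^2$, which yields the constant $\kappa/(\sqrt{2}\,\Phi)$ rather than $\kappa/\Phi$; the paper is content with the cruder inequality obtained directly from $\En\le0$.
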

Notice that \eqref{eq:curl-div} results from the curl-div inequality
\[ \|\ab\|_{H^2(\Omega)}\leq C_\Omega\|\curl\ab\|_{H^1(\Omega)}\]
valid for any vector field satisfying
\[\ab\in H^1(\Omega;\R^2),\quad \curl\ab\in H^1(\Omega;\R),\quad \mathrm{div\,}\ab=0\text{ on }\Omega,\quad\nu\cdot\ab=0\text{ on }\partial\Omega.\]
The vector field $\ab=\Ab-\Fb$ satisfies the aforementioned conditions  and (see \cite[p.~143]{FH-b})
\begin{equation}\label{eq:curl-div*}
\nabla^\bot\curl(\Ab-\Fb)=-\Phi^{-1}\jb(\psi,\Ab)\text{ on }\Omega,\quad \curl(\Ab-\Fb)=0\text{ on }\partial\Omega,
\end{equation}
where $\nabla^\bot=(-\partial_{x_2},\partial_{x_1})$ and $\jb(\psi,\Ab)$ is the supercurrent introduced in \eqref{eq:def-supercurrent}. We get then \eqref{eq:curl-div} with $C=2C_\Omega$.\medskip

Starting with any sequence of minimizing configurations of $\Ef$, we can  extract, by  a standard compactness argument, limits of the magnetic potential and the modulus of the order parameter.
\begin{proposition}\label{prop:conv-density}
Let  $(\psi_\Phi,\Ab_\Phi)_{\Phi>0}\subset\cH$ be a family of configurations such that $(\psi_\Phi,\Ab_\Phi)$ is a minimizer of $\Ef$ for every $\Phi>0$. Then, there exists a configuration $(\rho_*,\ab_*)\in\cH_0$ and a sequence $(\Phi_n)$ such that $\Phi_n\to+\infty$ and
\[
\begin{gathered}
|\psi_{\Phi_n}|\to \rho_*\text{ weakly in }H^1(\Omega;\R)\text{ and strongly in }L^2(\Omega;\R),\\
\Phi_n(\Ab_{\Phi_n}-\Fb)\to \ab_*\text{ weakly in }H^2(\Omega;\R^2) \text{ and strongly in }H^1(\Omega;\R^2).
\end{gathered}\]
\end{proposition}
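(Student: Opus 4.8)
The plan is to combine the a priori bounds of Proposition~\ref{prop:est-min} with weak compactness to produce the limit $(\rho_*,\ab_*)$, and then to prove separately that $\rho_*=0$ on $\omega$, which is the only substantial point. First, since $\|\psi_\Phi\|_\infty\le 1$ we have $\|\psi_\Phi\|_2\le|\Omega|^{1/2}$, and the diamagnetic inequality $\bigl|\nabla|\psi_\Phi|\bigr|\le\bigl|(-\ii\nabla-\Phi\Ab_\Phi)\psi_\Phi\bigr|$ together with $\|(-\ii\nabla-\Phi\Ab_\Phi)\psi_\Phi\|_2\le\kappa\|\psi_\Phi\|_2$ shows that $(|\psi_\Phi|)_{\Phi>0}$ is bounded in $H^1(\Omega;\R)$; multiplying \eqref{eq:curl-div} by $\Phi$ shows that $(\Phi(\Ab_\Phi-\Fb))_{\Phi>0}$ is bounded in $H^2(\Omega;\R^2)$. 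By the Rellich--Kondrachov theorem ($H^1\hookrightarrow\hookrightarrow L^2$ and $H^2\hookrightarrow\hookrightarrow H^1$) I can extract a single sequence $\Phi_n\to+\infty$ along which $|\psi_{\Phi_n}|\rightharpoonup\rho_*$ in $H^1$ and strongly in $L^2$, and $\Phi_n(\Ab_{\Phi_n}-\Fb)\rightharpoonup\ab_*$ in $H^2$ and strongly in $H^1$. The conditions $\Div(\Ab_\Phi-\Fb)=0$ and $\nu\cdot(\Ab_\Phi-\Fb)=0$ on $\partial\Omega$ (inherited from \eqref{eq:prop-A} and from the definition \eqref{eq:def-space} of $\cH$) are preserved under these limits, so $\ab_*$ is divergence free with vanishing normal trace; hence $(\rho_*,\ab_*)\in\cH$ and it remains only to show $\rho_*=0$ on $\omega$.

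The heart of the argument, and the main obstacle, is the confinement estimate showing that the strong interior field expels the density from $\omega$. I would fix an open set $\omega'\Subset\omega$ and a cutoff $\chi\in C_c^\infty(\omega)$ with $0\le\chi\le 1$ and $\chi\equiv 1$ on $\omega'$, so that $\chi\psi_\Phi\in H^1_0(\omega;\C)$. The standard magnetic lower bound (from the factorization $(-\ii\nabla-\mathbf A)^2=(D_1+\ii D_2)^*(D_1+\ii D_2)+\curl\mathbf A$, where $D_j$ are the components of $-\ii\nabla-\mathbf A$, with no boundary term because $\chi\psi_\Phi$ vanishes near $\partial\omega$) gives
\[
\int_\omega\bigl|(-\ii\nabla-\Phi\Ab_\Phi)(\chi\psi_\Phi)\bigr|^2\dd x\ \ge\ \int_\omega\curl(\Phi\Ab_\Phi)\,|\chi\psi_\Phi|^2\dd x.
\]
On the right, $\curl(\Phi\Ab_\Phi)=\tfrac{2\pi\Phi}{|\omega|}+\Phi\curl(\Ab_\Phi-\Fb)$ on $\omega$ by \eqref{eq:prop-A}, the correction being controlled via $\|\Phi\curl(\Ab_\Phi-\Fb)\|_2\le\kappa\|\psi_\Phi\|_2\le\kappa|\Omega|^{1/2}$ from Proposition~\ref{prop:est-min} and $\bigl\||\chi\psi_\Phi|^2\bigr\|_2\le\|\psi_\Phi\|_2\le|\Omega|^{1/2}$ (using $|\chi\psi_\Phi|\le 1$); thus the right-hand side is at least $\tfrac{2\pi\Phi}{|\omega|}\int_\omega|\chi\psi_\Phi|^2\dd x-C$ with $C$ independent of $\Phi$. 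On the left, the expansion $(-\ii\nabla-\Phi\Ab_\Phi)(\chi\psi_\Phi)=\chi(-\ii\nabla-\Phi\Ab_\Phi)\psi_\Phi-\ii(\nabla\chi)\psi_\Phi$ together with $\|(-\ii\nabla-\Phi\Ab_\Phi)\psi_\Phi\|_2\le\kappa\|\psi_\Phi\|_2$ bounds the left-hand side by a constant $C'$ independent of $\Phi$. Combining, $\tfrac{2\pi\Phi}{|\omega|}\int_{\omega'}|\psi_\Phi|^2\dd x\le\tfrac{2\pi\Phi}{|\omega|}\int_\omega|\chi\psi_\Phi|^2\dd x\le C+C'$, so $\int_{\omega'}|\psi_\Phi|^2\dd x\to 0$ as $\Phi\to+\infty$.

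Finally, passing to the limit along $(\Phi_n)$ and using the strong $L^2$ convergence $|\psi_{\Phi_n}|\to\rho_*$ yields $\rho_*=0$ a.e.\ on $\omega'$; letting $\omega'$ exhaust $\omega$ gives $\rho_*=0$ a.e.\ on $\omega$, and therefore $(\rho_*,\ab_*)\in\cH_0$, as required. Everything outside the confinement step is routine weak compactness; the one delicate mechanism is the lower bound $\ge\int\curl(\Phi\Ab_\Phi)|\cdot|^2$, whose leading term $\tfrac{2\pi\Phi}{|\omega|}$ diverges while the total magnetic energy stays uniformly bounded, and I expect checking the validity of this magnetic form inequality for the $H^1$ (rather than smooth) potentials $\Phi\Ab_\Phi$ and controlling the lower-order correction to be the only points requiring care.
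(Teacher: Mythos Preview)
Your proof is correct. The compactness part is identical to the paper's, and the only substantive step---showing $\rho_*=0$ on $\omega$---is handled by a genuinely different mechanism. The paper replaces $\Phi\Ab_\Phi$ by $\Phi\Fb$ on $\omega$ via Cauchy's inequality and \eqref{eq:curl-div}, then invokes the min-max principle with the Neumann magnetic Laplacian on $\omega$, using the known asymptotics $\lambda_1(\Phi\Fb,\omega)\to+\infty$ from \cite[Thm.~8.1.1]{FH-b} to force $\|\psi_\Phi\|_{L^2(\omega)}\to 0$. You instead localize with a cutoff $\chi\in C_c^\infty(\omega)$ and apply the pointwise factorization inequality $\int|(-\ii\nabla-\mathbf A)u|^2\ge\int(\curl\mathbf A)\,|u|^2$ directly, exploiting that the leading part of $\curl(\Phi\Ab_\Phi)$ equals $\tfrac{2\pi\Phi}{|\omega|}$ on $\omega$. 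Your route is more elementary and self-contained (no black-box eigenvalue asymptotics) and even yields the quantitative decay $\int_{\omega'}|\psi_\Phi|^2=\mathcal O(\Phi^{-1})$ on compact subsets; the paper's route avoids the cutoff and the exhaustion $\omega'\nearrow\omega$, giving decay on all of $\omega$ in one stroke. Your caveat about justifying the form identity for $H^1$ potentials is easily dispatched here, since $\Ab_\Phi\in H^2(\Omega;\R^2)\hookrightarrow C^{0,\alpha}$ and $\curl(\Phi\Ab_\Phi)\in H^1(\Omega)$, so a routine density argument suffices.
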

\begin{proof}
Let $\rho_{\Phi}=|\psi_\Phi|$ and $\ab_\Phi=\Phi(\Ab-\Fb)$. Thanks to Proposition~\ref{prop:est-min},  we have
\[\|\psi_\Phi\|_2^2\leq |\Omega|, \quad \|(-\ii\nabla-\Phi\Ab)\psi_\Phi\|_2^2\leq \kappa^2 |\Omega|,\quad\|\ab_\Phi\|_{H^2(\Omega)}^2\leq C^2\kappa^2 |\Omega|, \]
and by the diamagnetic inequality, we get
\[\|\rho_\Phi\|_2^2+\|\nabla\rho_\Phi\|^2_2\leq (1+\kappa^2)|\Omega|.\]
By the Banach--Alaoglu theorem,  we get a sequence $(\Phi_n)$ and a weak limit $(\rho_*,\ab_*)$  such that
\[\Phi_n\to+\infty,\quad \rho_{\Phi_n}\to\rho_*\text{ weakly in }H^1(\Omega;\R)\text{ and }\ab_{\Phi_n}\to\ab_*\text{ weakly in } H^2(\Omega;\R^2).\]
Thanks to  the compact embeddings of $H^1(\Omega;\R)$ and $H^2(\Omega;\R^2)$ in $L^2(\Omega;\R)$ and $H^1(\Omega;\R^2)$ respectively, we can arrange for the sequences to converge strongly in $L^2(\Omega;\R)$ and $H^1(\Omega;\R^2)$, respectively. The strong convergence in $H^1(\Omega;\R^2)$ yields
\[ \mathrm{div\,}\ab_*=0\text{ on }\Omega,\quad \nu\cdot\ab_*=0\text{ on }\partial\Omega,\]
hence $(\rho_*,\ab_*)\in\cH$. It remains to show that $\rho_*=0$ on $\omega$. For this, we write by \eqref{eq:curl-div} and Cauchy's inequality,
\[\|(-\ii\nabla-\Phi\Ab)\psi\|_{L^2(\omega)}^2\geq \frac12\|(-\ii\nabla-\Phi\Fb)\psi\|_{L^2(\omega)}^2-\widehat C\kappa^2|\Omega|,
 \]
where $\widehat C$ is a positive constant.
Then, we use the min-max principle and write
\[
    \|(-\ii\nabla-\Phi\Ab)\psi\|_{L^2(\omega)}^2\geq \frac12\lambda_1(\Phi\Fb,\omega)\|\psi\|_{L^2(\omega)}^2-\widehat C\kappa^2|\Omega|,
\]
where $\lambda_1(\Phi\Fb,\omega)$ is the lowest eigenvalue for $(-\ii\nabla-\Phi\Fb)^2$ in $L^2(\omega)$, with Neumann boundary condition on $\omega$. Knowing that $\lambda_1(\Phi\Fb,\omega)\to+\infty$ as $\Phi\to+\infty$ (see \cite[Thm.~8.1.1]{FH-b}), we deduce that
$\|\rho_{\Phi_n}\|_{L^2(\omega)}\to0$,
hence $\rho_*=0$ on $\omega$.
\end{proof}
Our next task is to refine the convergence of the minimizing order parameter when restricted to $\Omega_0$.
\begin{proposition}\label{prop:conv-min}
Let  $(\psi_\Phi,\Ab_\Phi)_{\Phi>0}\subset\cH$ be a family of configurations such that $(\psi_\Phi,\Ab_\Phi)$ is a minimizer of $\Ef$ for every $\Phi>0$. Then, there exists a sequence $(\Phi_n)$ and a function $u_*\in H^1(\Omega;\C)$ such that $\Phi_n\to+\infty$ and
\[\begin{gathered}
\cE_{\Phi_n}(\psi_{\Phi_n},\Ab_{\Phi_n})\geq \rG(\Phi_n)+o(1),\\
 U_{\lfloor\Phi_n\rfloor}^{-1}\psi_{\Phi_n}\to u_*\text{ in }H^1(\Omega_0;\C),\\
 u_*=0\text{ on }\omega,
 \end{gathered}\]
 where, for $k\in\N$, $U_k$ is the function introduced in \eqref{eq:def-U-1}.
\end{proposition}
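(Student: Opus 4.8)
The plan is to upgrade the compactness of Proposition~\ref{prop:conv-density} by means of the gauge transformation $U_{\lfloor\Phi_n\rfloor}$, and then to run a weak lower semicontinuity argument whose sharpness is guaranteed by the a priori upper bound of Proposition~\ref{prop:ub}. First I would start from the subsequence of Proposition~\ref{prop:conv-density}, set $\ab_{\Phi_n}=\Phi_n(\Ab_{\Phi_n}-\Fb)$ and $\tilde\psi_n=U_{\lfloor\Phi_n\rfloor}^{-1}\psi_{\Phi_n}$ on $\Omega_0$. Since $|U_{\lfloor\Phi_n\rfloor}|=1$ we have $|\tilde\psi_n|=|\psi_{\Phi_n}|$, while \eqref{eq:prop-U1*} yields
\[
(-\ii\nabla-\Phi_n\Ab_{\Phi_n})\psi_{\Phi_n}
=
U_{\lfloor\Phi_n\rfloor}\,(-\ii\nabla-\{\Phi_n\}\Fb-\ab_{\Phi_n})\tilde\psi_n
\quad\text{on }\Omega_0 .
\]
By Proposition~\ref{prop:est-min} the right-hand side is bounded in $L^2(\Omega_0)$; since $\{\Phi_n\}\in[0,1)$, $\Fb$ is bounded, $\|\tilde\psi_n\|_\infty\le1$ and $\ab_{\Phi_n}$ is bounded in $L^2$, this forces $(\tilde\psi_n)$ to be bounded in $H^1(\Omega_0)$. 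Passing to a further subsequence I obtain $\{\Phi_n\}\to\Phi_0\in[0,1]$ and $\tilde\psi_n\rightharpoonup u_*$ weakly in $H^1(\Omega_0)$, strongly in $L^2$ and $L^4$. Then $|u_*|=\rho_*$ on $\Omega_0$, and since $\rho_*\in H^1(\Omega)$ vanishes on $\omega$ its trace on $\partial\omega$ is zero; hence $u_*\in\cX(\Omega_0)$ and its extension by zero defines an element of $H^1(\Omega;\C)$ with $u_*=0$ on $\omega$, which is the third assertion.

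Next I would prove the energy lower bound. The contribution of $\omega$ to $\cE_{\Phi_n}$ is bounded below by $-\kappa^2\|\psi_{\Phi_n}\|_{L^2(\omega)}^2=o(1)$, since $|\psi_{\Phi_n}|\to0$ in $L^2(\omega)$. On $\Omega_0$, after the gauge transformation, the covariant derivative $(-\ii\nabla-\{\Phi_n\}\Fb-\ab_{\Phi_n})\tilde\psi_n$ converges weakly in $L^2$ to $(-\ii\nabla-\Phi_0\Fb-\ab_*)u_*$ (the gradient weakly, the zeroth-order terms strongly, using $\{\Phi_n\}\to\Phi_0$ together with $\ab_{\Phi_n}\to\ab_*$ and $\tilde\psi_n\to u_*$ in $L^4$), while the quadratic, quartic and magnetic terms converge. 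Weak lower semicontinuity of the $L^2$ norm then gives
\[
\liminf_{n\to\infty}\cE_{\Phi_n}(\psi_{\Phi_n},\Ab_{\Phi_n})
\ge
\int_{\Omega_0}\Bigl(|(-\ii\nabla-\Phi_0\Fb-\ab_*)u_*|^2-\kappa^2|u_*|^2+\tfrac{\kappa^2}{2}|u_*|^4\Bigr)\dd x
+\int_{\Omega}|\curl\ab_*|^2\dd x .
\]
For $\Phi_0\ne0$ the right-hand side equals $\cG_{\Phi_0}(u_*,\Ab_*)$ with $\Ab_*=\Fb+\Phi_0^{-1}\ab_*$, so that $(u_*,\Ab_*)\in\cH_0$, whence it is $\ge\rG(\Phi_0)$; for $\Phi_0=0$, where this rescaling degenerates, I would instead invoke the diamagnetic inequality to bound the right-hand side below by $\cG_0(|u_*|,\Fb)\ge\rG(0)$. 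Using the continuity and periodicity of $\rG$, so that $\rG(\Phi_n)=\rG(\{\Phi_n\})\to\rG(\Phi_0)$, this yields $\cE_{\Phi_n}(\psi_{\Phi_n},\Ab_{\Phi_n})\ge\rG(\Phi_n)+o(1)$.

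Finally I would deduce the strong $H^1$ convergence by sandwiching. Proposition~\ref{prop:ub} gives $\cE_{\Phi_n}(\psi_{\Phi_n},\Ab_{\Phi_n})=\mathrm{E}(\Phi_n)\le\rG(\Phi_n)\to\rG(\Phi_0)$, so together with the lower bound all the inequalities above are asymptotic equalities: $\cE_{\Phi_n}\to\rG(\Phi_0)=\cG_{\Phi_0}(u_*,\Ab_*)$. Since every energy term other than the remainder on $\omega$ and the kinetic term on $\Omega_0$ already converges, and the remainder on $\omega$ is bounded below by $o(1)$, both must converge, the kinetic term to its semicontinuity bound,
\[
\|(-\ii\nabla-\{\Phi_n\}\Fb-\ab_{\Phi_n})\tilde\psi_n\|_{L^2(\Omega_0)}^2
\to
\|(-\ii\nabla-\Phi_0\Fb-\ab_*)u_*\|_{L^2(\Omega_0)}^2 .
\]
Together with the weak convergence, this convergence of norms upgrades to strong $L^2$ convergence of $(-\ii\nabla-\{\Phi_n\}\Fb-\ab_{\Phi_n})\tilde\psi_n$; adding back the strongly convergent zeroth-order term $(\{\Phi_n\}\Fb+\ab_{\Phi_n})\tilde\psi_n$ gives $\nabla\tilde\psi_n\to\nabla u_*$ in $L^2(\Omega_0)$, and with $\tilde\psi_n\to u_*$ in $L^2$ this is precisely $\tilde\psi_n\to u_*$ in $H^1(\Omega_0;\C)$.

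The hard part is this last upgrade from weak to strong convergence: it is not available from the lower bound alone but rests on the matching upper bound of Proposition~\ref{prop:ub}, which forces the kinetic energy---the only term controlled merely by lower semicontinuity---to converge rather than to lose mass in the limit. A secondary technical point is the degenerate case $\Phi_0=0$ (the fractional parts accumulating at an integer), where the reconstruction $\Ab_*=\Fb+\Phi_0^{-1}\ab_*$ of the limiting vector potential breaks down and the diamagnetic inequality must take its place.
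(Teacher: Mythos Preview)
Your argument is correct, but it follows a genuinely different route from the paper's. The paper does not work directly in $H^1(\Omega_0)$: instead it exploits the Euler--Lagrange equation $(-\ii\nabla-\ab)^2u=\kappa^2(1-|u|^2)u$ and interior elliptic estimates to get uniform $H^2(\Omega_\varepsilon)$ bounds on each approximating domain $\Omega_\varepsilon=\{x\in\Omega_0:\mathrm{dist}(x,\partial\omega)>\varepsilon\}$, extracts a diagonal subsequence converging in $H^2_{\rm loc}(\Omega_0)$ (hence strongly in each $H^1(\Omega_\varepsilon)$), and then passes to the energy inequality on $\Omega_\varepsilon$ followed by monotone convergence as $\varepsilon\to0$. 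The paper also sidesteps the variable fractional part by assuming without loss of generality that $\{\Phi_n\}$ is a fixed constant $\Phi_0$, so that $\rG(\Phi_n)=\rG(\Phi_0)$ is constant along the sequence. By contrast, you never use the equation or the $\Omega_\varepsilon$ exhaustion: the $L^2$ bound on the covariant gradient together with $\|\tilde\psi_n\|_\infty\le1$ already gives a global $H^1(\Omega_0)$ bound, and weak lower semicontinuity replaces monotone convergence. Your energy--sandwiching upgrade from weak to strong $H^1(\Omega_0)$ convergence is in fact sharper than what the paper writes out, since the paper's proof only explicitly yields convergence in $H^1(\Omega_\varepsilon)$ for each $\varepsilon$. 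What the paper's approach buys in return is the stronger $H^2_{\rm loc}$ convergence, which your variational argument does not see.

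One small point: you invoke continuity of $\Phi\mapsto\rG(\Phi)$, which is not proved in the paper. In your argument only upper semicontinuity is actually needed (to conclude $\limsup\rG(\{\Phi_n\})\le\rG(\Phi_0)$, which combined with $\cE_{\Phi_n}\le\rG(\Phi_n)$ and $\liminf\cE_{\Phi_n}\ge\rG(\Phi_0)$ closes the sandwich), and this is immediate by testing $\cG_{\{\Phi_n\}}$ with a fixed minimizer of $\cG_{\Phi_0}$. It would be worth saying so explicitly.
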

\begin{proof}
The proof relies on a compactness argument and the extraction of a subsequence. To simplify the exposition, we will skip the reference to the subsequence.

For $\Phi>0$, we write $\Phi=\Phi_0+n$ where $n=\lfloor \Phi\rfloor$ and $\Phi_0=\{\Phi\}\in [0,1)$. Without loss of generality, we will assume that $\Phi_0$ is a fixed constant. Starting from $(\psi,\Ab)=(\psi_\Phi,\Ab_\Phi)$, we put
\[ \rho=|\psi|,\quad u=u_\Phi:=U_n^{-1}\psi,\quad \ab=\ab_\Phi:=\Phi_0\Fb+\Phi(\Ab-\Fb), \]
and we observe by \eqref{eq:curl-div} that
\[ \|\ab\|_{H^2(\Omega)}\leq C_0,\]
where $C_0$ is a positive constant, independent of $\Phi$.  Moreover, note that $u$ is defined on $\Omega_0$,  $|u|=\rho|_{\Omega_0}$, and the  following identities hold in $\Omega_0$,
\[\begin{gathered}
(-\ii\nabla-\ab)u=U_n^{-1}(-\ii\nabla-\Phi\Ab)\psi,\\
(-\ii\nabla-\ab)^2u=U_n^{-1}(-\ii\nabla-\Phi\Ab)^2\psi=\kappa^2(1-|u|^2)u.
\end{gathered}\]
Consequently, with
\begin{equation}\label{eq:def-Om-ep}
0<\varepsilon<\varepsilon_0:=\mathrm{dist}(\partial\omega,\partial\Omega),\quad \Omega_\varepsilon:=\{x\in\Omega_0\colon \mathrm{dist}(x,\partial\omega)>\varepsilon\},
\end{equation} we have  by Proposition~\ref{prop:est-min} and elliptic estimates,
\[\|u\|_{H^2(\Omega_\varepsilon)}\leq C_\varepsilon, \]
where $C_\varepsilon$ depends on $\varepsilon$ but is independent of $\Phi$.

Cantor's diagonal argument yields a sequence  and a  function $u_*\in H^2_{\rm loc}(\Omega_0;\C)$ such that,  for $0<\varepsilon<\varepsilon_0$,
\begin{equation}\label{eq:proof-conv}
 u\to u_*\text{ weakly in }H^2(\Omega_\varepsilon;\C) \mbox{ and strongly in }H^1(\Omega_\varepsilon;\C),\end{equation}
and by Propositions~\ref{prop:est-min} and ~\ref{prop:conv-density},
\begin{equation}\label{eq:proof-conv*}
\begin{gathered}
\|u_*\|_\infty\leq 1,\\
\rho\to \rho_*\text{ in }L^2(\Omega;\R)\text{ with }\rho_*|_{\omega}=0,\\
\ab\to \Phi_0\Fb+\ab_*\text{ in }H^1(\Omega;\R^2).
\end{gathered}
\end{equation}
Consequently,  $|u_*|=\rho_*|_{\Omega_0}$.  Moreover, by monotone convergence
\[\int_{\Omega_0}|(-\ii\nabla-\Phi_0\Ab_*)u_*|^2\dd x=\lim_{\varepsilon\to0_+}\int_{\Omega_\varepsilon}|(-\ii\nabla-\Phi_0\Ab_*)u_*|^2\dd x, \]
where
\[\Ab_*=\begin{cases}\Fb+\Phi_0^{-1}\ab_*&\text{if }\Phi_0>0,\\
\Fb&\text{if }\Phi_0=0,\end{cases}\]
and we have by \eqref{eq:proof-conv}-\eqref{eq:proof-conv*} and Proposition~\ref{prop:est-min},
\begin{multline*}\forall\varepsilon\in(0,\varepsilon_0),\quad \int_{\Omega_\varepsilon}|(-\ii\nabla-\Phi_0\Ab_*)u_*|^2\dd x\\
\leq \limsup_{\Phi\to+\infty}\int_{\Omega}|(-\ii\nabla-\Phi\Ab)\psi|^2\dd x\leq \kappa^2|\Omega|,\end{multline*}
hence it follows that
\[\int_{\Omega_0}|(-\ii\nabla-\Phi_0\Ab_*)u_*|^2\dd x \leq \kappa^2|\Omega|.\]
 Thus, we have $(u_*,\Ab_*)\in\cH_0$ and, for $0<\varepsilon<\varepsilon_0$,
\[\begin{aligned}
\En&=\Ef(\psi,\Ab)\\
&\geq \int_{\Omega_\varepsilon}\Bigl(|(-\ii\nabla-\ab)u|^2+\frac{\kappa^2}2|u|^4\Bigr)\dd x-\kappa^2\int_{\Omega}\rho^2\dd x+\int_\Omega|\curl\ab|^2\dd x .
\end{aligned}\]
Sending $\Phi\to+\infty$ (along the sequence $(\Phi_n)$) and then $\varepsilon\to0$, we get by monotone convergence
\[\liminf_{\Phi_n\to+\infty}\En\geq \cG_{\Phi_0}(u_*,\Ab_*)\geq \rG(\Phi_0).\]
To finish the proof, we use Propositions~\ref{prop:ub} and \ref{prop:GL-eff}, and this yields that $(u_*,\Ab_*)$ is a minimizer of $\cG_{\Phi_0}$.
\end{proof}

Collecting Proposition~\ref{prop:ub} and \ref{prop:conv-min}, we get the conclusion of Theorem~\ref{thm:GL}. Corollary~\ref{cor:oscillations1} also results from a standard argument, and we refer to the proof of \cite[Corollary~1.5]{KP} for details.

\section{The effective eigenvalue and oscillations under local fields}
\label{sec:eigenvalueapproximation}

In this section we prove Theorem~\ref{thm:ev}, from which Corollary~\ref{cor:oscillations2} follows by a standard argument (see \cite[Proof of Thm.~1.7(2)]{KP}).

For every $\Phi>0$, let $u_\Phi$ denote a normalized ground state of $\lambda_1^\Omega(\Phi)$.
Similar to Proposition~\ref{prop:conv-density}, we can prove that the function $\rho_\Phi:=|u_\Phi|$ is bounded in $H^1(\Omega;\R)$.
\begin{proposition}\label{prop:con-gs}
There exist positive  constants $C_1,\Phi_1$ such that, for all $\Phi\geq \Phi_1$, we have
\[\|\rho_\Phi\|_{H^1(\Omega;\R)}\leq C_1\]
and
\[\int_{\omega}|\rho_\Phi|^2\dd x\leq \frac{C_1}{\Phi}.\]
\end{proposition}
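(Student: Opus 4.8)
The plan is to reproduce the two ingredients behind Proposition~\ref{prop:conv-density} --- the diamagnetic inequality and the divergence of the local Neumann eigenvalue --- but in the present, cleaner situation where the vector potential is \emph{exactly} $\Phi\Fb$, so no curl-div correction is needed. First I would control the Rayleigh quotient uniformly in $\Phi$. Since $u_\Phi$ is a normalized ground state, $\|u_\Phi\|_{L^2(\Omega)}=1$ and $\|(-\ii\nabla-\Phi\Fb)u_\Phi\|^2_{L^2(\Omega)}=\lambda_1^\Omega(\Phi)$. Combining the comparison \eqref{eq:comparison} with the periodicity and maximality statements of Proposition~\ref{prop:Hf} gives the uniform bound
\[
\lambda_1^\Omega(\Phi)\leq \lambda_1^{\Omega_0}(\Phi)\leq \lambda_1^{\Omega_0}(\tfrac12)=:M,\qquad\forall\,\Phi>0.
\]
The pointwise diamagnetic inequality $|\nabla\rho_\Phi|\leq |(-\ii\nabla-\Phi\Fb)u_\Phi|$ then yields $\|\nabla\rho_\Phi\|^2_{L^2(\Omega)}\leq\lambda_1^\Omega(\Phi)\leq M$, and since $\|\rho_\Phi\|_{L^2(\Omega)}=\|u_\Phi\|_{L^2(\Omega)}=1$ this already produces the first estimate with $C_1=\sqrt{1+M}$.

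For the mass carried by $\omega$, I would localize the Rayleigh quotient to the inner domain. Restricting $u_\Phi\in H^1(\Omega)$ to $\omega$ gives a valid test function for the Neumann realization of $(-\ii\nabla-\Phi\Fb)^2$ on $L^2(\omega)$, so the min-max principle yields
\begin{multline*}
\lambda_1(\Phi\Fb,\omega)\,\|u_\Phi\|^2_{L^2(\omega)}
\leq\|(-\ii\nabla-\Phi\Fb)u_\Phi\|^2_{L^2(\omega)}\\
\leq\|(-\ii\nabla-\Phi\Fb)u_\Phi\|^2_{L^2(\Omega)}
=\lambda_1^\Omega(\Phi)\leq M,
\end{multline*}
where $\lambda_1(\Phi\Fb,\omega)$ is the local eigenvalue introduced in the proof of Proposition~\ref{prop:conv-density}. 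On $\omega$ the field is constant, $\curl(\Phi\Fb)=b=\frac{2\pi}{|\omega|}\Phi$, so I would invoke the de Gennes--type asymptotics of \cite[Thm.~8.1.1]{FH-b} to obtain constants $c>0$ and $\Phi_1>0$ with $\lambda_1(\Phi\Fb,\omega)\geq c\,\Phi$ for $\Phi\geq\Phi_1$. Dividing then gives $\int_\omega|\rho_\Phi|^2\dd x=\|u_\Phi\|^2_{L^2(\omega)}\leq M/(c\Phi)$, and enlarging $C_1$ if necessary covers both assertions.

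The main obstacle is quantitative rather than conceptual: to land on the sharp rate $O(1/\Phi)$, and not merely $o(1)$, I cannot use the bare divergence $\lambda_1(\Phi\Fb,\omega)\to+\infty$ quoted in Proposition~\ref{prop:conv-density}, but must upgrade it to the \emph{linear} lower bound $\lambda_1(\Phi\Fb,\omega)\geq c\Phi$. This is precisely where the constancy of the field on $\omega$ is essential: the relevant Neumann eigenvalue behaves like $\Theta_0 b\sim\Theta_0\frac{2\pi}{|\omega|}\Phi$, with $\Theta_0\in(0,1)$ the de Gennes constant, so the growth is genuinely of order $b$ and a clean lower bound of the form $c\,b$ is available for large $\Phi$. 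Once this linear bound is secured, everything else reduces to the elementary variational identities displayed above.
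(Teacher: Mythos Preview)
Your argument is correct and follows essentially the same route as the paper: the uniform $H^1$ bound comes from the comparison \eqref{eq:comparison} together with the periodicity/maximum of Proposition~\ref{prop:Hf} and the diamagnetic inequality, while the $O(1/\Phi)$ decay on $\omega$ comes from the min-max principle on $\omega$ combined with the linear (de\,Gennes) asymptotics $\lambda_1(\Phi\Fb,\omega)\sim\Theta_0\frac{2\pi}{|\omega|}\Phi$. The paper's proof is terser but relies on exactly the same two ingredients.
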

\begin{proof}
The first inequality is the consequence of Proposition~\ref{prop:Hf} and the non-asymptotic bound in \eqref{eq:comparison}. The second inequality follows from the min-max principle,
\[ \lambda_1(\Phi\Fb,\omega)\int_\omega|\rho_\Phi|^2\dd x\leq \lambda_1^\omega(\Phi), \]
where $\lambda_1(\Phi\Fb,{\Bl \omega})$ is the lowest eigenvalue for $(-\ii\nabla-\Phi\Fb)^2$ in $L^2(\omega)$, with Neumann boundary condition on $\omega$. We obtain the desired inequality thanks to the known asymptotics  $\lambda_1(\Phi\Fb,\omega)\sim\Theta_0 |\curl\Fb|\Phi$, where $\Theta_0\approx 0.59$ is the de\,Gennes constant (see \cite[Thm.~8.1.1]{FH-b}).
\end{proof}

Next we define the function  $v_\Phi$ on $\Omega_0$ as
\[v_\Phi=U_n^{-1}u_{\Phi},\quad n=\lfloor \Phi\rfloor,\]
where $U_n$ is the function introduced in \eqref{eq:def-U-1}. Similar to Proposition~\ref{prop:conv-min}, we can prove that $v_\Phi$ is bounded locally in $H^2(\Omega_0)$.

\begin{proposition}\label{prop:con-gs*}
Let $0<\varepsilon<\varepsilon_0:=\mathrm{dist}(\partial\omega,\partial\Omega)$.  There exist positive  constants $C_\varepsilon,\Phi_\varepsilon$ such that, for all $\Phi\geq \Phi_\varepsilon$, we have
\[\|v_\Phi\|_{H^2(\Omega_\varepsilon;\C)}\leq C_\varepsilon,\]
where $\Omega_\varepsilon$ is introduced in \eqref{eq:def-Om-ep}.
\end{proposition}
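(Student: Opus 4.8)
The plan is to mimic the proof of Proposition~\ref{prop:conv-min}: after gauging away the integer part of the flux, $v_\Phi$ solves an eigenvalue equation whose magnetic potential and eigenvalue are bounded uniformly in $\Phi$, and the asserted bound then follows from interior and Neumann elliptic estimates, localized away from $\partial\omega$.

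First I would record the equation satisfied by $v_\Phi$. Since $u_\Phi$ is a normalized ground state of $\lambda_1^\Omega(\Phi)$, it solves $(-\ii\nabla-\Phi\Fb)^2u_\Phi=\lambda_1^\Omega(\Phi)u_\Phi$ in $\Omega$ with $\nu\cdot\nabla u_\Phi=0$ on $\partial\Omega$. Writing $\Phi=n+\Phi_0$ with $n=\lfloor\Phi\rfloor$ and $\Phi_0=\{\Phi\}\in[0,1)$, the gauge identity \eqref{eq:prop-U1*} gives, on $\Omega_0$,
\[ (-\ii\nabla-\Phi_0\Fb)^2v_\Phi=\lambda_1^\Omega(\Phi)\,v_\Phi,\qquad v_\Phi=U_n^{-1}u_\Phi, \]
with $\nu\cdot\nabla v_\Phi=0$ on $\partial\Omega$ (recall $\nu\cdot\Fb=0$ there by \eqref{eq:prop-A}, so the magnetic Neumann condition reduces to the ordinary one). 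The crucial point is that after this gauge transformation the effective flux $\Phi_0$ lies in the fixed bounded interval $[0,1)$, so the coefficients of the operator are bounded independently of $\Phi$.

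Next I would collect the uniform a priori bounds. Since $|v_\Phi|=|u_\Phi|$ we have $\|v_\Phi\|_{L^2(\Omega_0)}\leq\|u_\Phi\|_{L^2(\Omega)}=1$; by \eqref{eq:comparison} and the periodicity of $\lambda_1^{\Omega_0}(\cdot)$ from Proposition~\ref{prop:Hf}, the eigenvalue is uniformly bounded, $\lambda_1^\Omega(\Phi)\leq\lambda_1^{\Omega_0}(1/2)$. The ground state identity $\|(-\ii\nabla-\Phi\Fb)u_\Phi\|_{L^2(\Omega)}^2=\lambda_1^\Omega(\Phi)$, together with the pointwise equality $|(-\ii\nabla-\Phi\Fb)u_\Phi|=|(-\ii\nabla-\Phi_0\Fb)v_\Phi|$ coming from \eqref{eq:prop-U1*}, then gives $\|(-\ii\nabla-\Phi_0\Fb)v_\Phi\|_{L^2(\Omega_0)}\leq C$. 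Since $\Phi_0\in[0,1)$ and $\Fb$ is bounded, the triangle inequality upgrades this to a uniform ordinary $H^1$ bound $\|v_\Phi\|_{H^1(\Omega_0)}\leq C$ over all of $\Omega_0$. This is where the integer gauge shift is essential: only after removing the integer flux does the bounded magnetic energy control the plain $H^1$ norm uniformly in $\Phi$.

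Finally I would apply elliptic regularity. Fixing a cutoff $\chi$ that equals $1$ on $\Omega_\varepsilon$ and vanishes on $\{\mathrm{dist}(\cdot,\partial\omega)<\varepsilon/2\}$, the function $\chi v_\Phi$ is supported away from $\partial\omega$ and satisfies the same elliptic equation up to a commutator $[(-\ii\nabla-\Phi_0\Fb)^2,\chi]v_\Phi$, which involves only $v_\Phi$ and $\nabla v_\Phi$ on $\mathrm{supp}\,\nabla\chi$ with $\Phi$-independent coefficients, hence is controlled in $L^2$ by the uniform $H^1$ bound above. Standard second-order elliptic estimates—interior away from $\partial\omega$ and the Neumann estimate up to $\partial\Omega$, where $\chi\equiv1$—then yield $\|\chi v_\Phi\|_{H^2(\Omega_0)}\leq C_\varepsilon\bigl(\|(-\ii\nabla-\Phi_0\Fb)^2(\chi v_\Phi)\|_{L^2}+\|v_\Phi\|_{L^2}\bigr)$, and the right-hand side is bounded uniformly in $\Phi$ by the previous step. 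Since $\chi\equiv1$ on $\Omega_\varepsilon$, this gives $\|v_\Phi\|_{H^2(\Omega_\varepsilon;\C)}\leq C_\varepsilon$. The main obstacle is precisely the behavior near $\partial\omega$: because the strong field concentrates on $\omega$, one cannot hope to control $v_\Phi$ up to $\partial\omega$ uniformly, which forces the localization to $\Omega_\varepsilon$ and is responsible for the dependence of $C_\varepsilon$ on $\varepsilon$ (blowing up as $\varepsilon\to0$).
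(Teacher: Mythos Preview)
Your proposal is correct and follows exactly the same approach as the paper: gauge away the integer part of the flux via \eqref{eq:prop-U1*} to obtain the eigenvalue equation $(-\ii\nabla-\Phi_0\Fb)^2v_\Phi=\lambda_1^\Omega(\Phi)v_\Phi$ on $\Omega_0$ with uniformly bounded coefficients and right-hand side, then invoke elliptic $L^2$ estimates. The paper's proof is two sentences; you have simply supplied the details (the uniform $H^1$ bound, the cutoff localization away from $\partial\omega$, and the Neumann estimate up to $\partial\Omega$) that the paper leaves implicit.
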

\begin{proof}
With $\Phi_0=\{\Phi\}$, we have
\[
(-\ii\nabla-\Phi_0\Fb)^2v_\Phi=\lambda_1^\Omega(\Phi)v_\Phi\text{ on }\Omega_0.
\]
The bound in $H^2(\Omega_\varepsilon,\C)$ is the result of elliptic $L^2$ estimates.
\end{proof}
\begin{remark}\label{rem:vb}
Thanks to Proposition~\ref{prop:con-gs}, the function $v_\Phi$ is almost normalized in $L^2(\Omega_0)$; in fact, we have as $\Phi\to+\infty$,
\[\int_{\Omega_0}|v_\Phi|^2\dd x=1+\mathcal O(1/\Phi). \]
\end{remark}

\begin{proof}[Proof of Theorem~\ref{thm:ev}]
Thanks to \eqref{eq:comparison}, we have
\[ \limsup_{\Phi\to+\infty} \bigl(\lambda_1^\Omega(\Phi)-\lambda_1^{\Omega_0}(\Phi)\bigr)\leq 0,\]
so it suffices to show that
\[ \liminf_{\Phi\to+\infty} \bigl(\lambda_1^\Omega(\Phi)-\lambda_1^{\Omega_0}(\Phi)\bigr)\geq 0.\]
If this does not hold, then there exist a positive constant $c_*$ and an unbounded set $\cI\subset\R_+$ such that,
\begin{equation}\label{eq:hyp}
\forall\, \Phi\in \cI, \quad \lambda_1^\Omega(\Phi)-\lambda_1^{\Omega_0}(\Phi)<-c_*.
\end{equation}
Since $(\rho_\Phi)_{\Phi\in \cI}$ is bounded in $H^1(\Omega;\R)$, and $(v_\Phi)_{\Phi\in\cI}$ is bounded in $H^2(\Omega_\varepsilon;\C)$ for every $\varepsilon<\varepsilon_0$, there are functions
\[\rho_*\in H^1(\Omega;\R),\quad v_*\in H^2_{\rm loc}(\Omega_0;\C),\]
and a sequence $(\Phi_k)\subset\cI$ such that
\[ \Phi_k\to+\infty,\quad \{\Phi_k\}\to \Phi_*\in[0,1],\]
and
\[\rho_{\Phi_k}\to\rho_*\text{ strongly in }L^2(\Omega;\R),\quad v_{\Phi_k}\to v_*\text{ strongly in }H^1(\Omega_\varepsilon).\]
Moreover,  we have
\[ \int_{\Omega_\varepsilon}|(-\ii\nabla-\{\Phi_k\}\Fb)v_{\Phi_k}|^2\dd x=\int_{\Omega_\varepsilon}|(-\ii\nabla-\Phi_k\Fb)u_{\Phi_k}|^2\dd x\leq \lambda_1^\Omega(\Phi_k)\leq \lambda_1^{\Omega_0}(1/2),\]
so that, by taking $k\to+\infty$ then $\varepsilon\to0_+$ we have by monotone convergence
\[ \int_{\Omega_0}|\nabla v_*|^2\dd x\leq \|\Fb\|_\infty^2+ \lambda_1^\Omega(1/2).\]
Thus, $v_*\in H^1(\Omega_0;\C)$ and $|v_*|=\rho_*$. By Proposition~\ref{prop:con-gs} we get that $\rho_*|_{\partial\omega}=0$, hence $v_*|_{\partial\omega}=0$, and  by Remark~\ref{rem:vb}, we have
$\int_{\Omega_0}|v_*|^2\dd x=1$.

Finally, for every $0<\varepsilon<\varepsilon_0$, we have
\[\lambda_1^{\Omega}(\Phi_k)\geq  \int_{\Omega_\varepsilon}|(-\ii\nabla-\{\Phi_k\}\Fb)v_{\Phi_k}|^2\dd x,\]
and by taking $k\to+\infty$, we get 
\[\limsup_{k\to+\infty}\lambda_1^{\Omega}(\Phi_k)\geq
\int_{\Omega_\varepsilon}|(-\ii\nabla-\Phi_*\Fb)v_{*}|^2\dd x,\]
then by taking $\varepsilon\to0_+$, we get by monotone convergence and the min-max principle
\[\limsup_{k\to+\infty}\lambda_1^{\Omega}(\Phi_k)\geq
\int_{\Omega_0}|(-\ii\nabla-\Phi_*\Fb)v_{*}|^2\dd x\geq \lambda_1^{\Omega_0}(\Phi_*),\]
and eventually
\[ \limsup_{k\to+\infty}\bigl(\lambda_1^{\Omega}(\Phi_k)-\lambda_1^{\Omega_0}(\Phi_k)\bigr)\geq 0\]
which violates \eqref{eq:hyp}.
\end{proof}

\subsection*{Acknowledgments}
{\small This work started when AK visited Lund University in the period  Sep 2022--28 Feb 2023, and the authors would like to thank the Knut and Alice foundation for the financial support (grant KAW 2021.0259). AK is partially supported by CUHKSZ, grant no.\ UDF01003322. Part of this work was carried out when MS visited the Chinese University of Hong Kong (Shenzhen) during March 2024.}

%\begin{thebibliography}{99}
%
%\bibitem{1} B. D. O. Anderson and J. B. Moore, {\it Linear optimal
%control},  Prentice-Hall,  Englewood Cliffs, NJ, 1971.
%
%
%\bibitem{2}S. Aubry and P.Y. Le Daeron, {\it The discrete
%Frenkel-Kontorova model and its extensions I},  Physica D  {\bf 8} (1983), 381--422.
%
%\bibitem{3} J. Baumeister, A. Leitao and G. N. Silva, {\it On
%the value function for nonautonomous optimal control problem with
%infinite horizon},  Systems Control Lett. {\bf 56}  (2007),  188--196.
%
%
%\bibitem{4} J. Blot, {\it Infinite-horizon Pontryagin
%principles without invertibility},  J. Nonlinear Convex Anal.
%{\bf 10} (2009),  177--189.
%
%\bibitem{5} J. Blot and P. Cartigny, {\it  Optimality in
%infinite-horizon variational problems under sign conditions},
%J. Optim. Theory Appl. {\bf 106} (2000),  411--419.
%
%
%
%
%
%
%
%
%\end{thebibliography}


\begin{thebibliography}{99}
%
\bibitem{AG} W. Assaad and E.L.  Giacomelli.
{\it A 3D-Schr\"odinger operator under magnetic steps with semiclassical applications. }
Discrete Contin. Dyn. Syst. {\bf 43} (2023),  619--660.
%
\bibitem{AHK} A. Assaad, B. Helffer and A. Kachmar.
{\it Semiclassical eigenvalue estimates under magnetic steps.}
Anal. PDE {\bf 17} (2024), 535--585.
%
\bibitem{AKPS} W. Assaad, A. Kachmar and M. P. Sundqvist.
{\it The distribution of superconductivity near a magnetic barrier.}
Comm. Math. Phys. {\bf 366} (2019), 269--332.
%
\bibitem{BE} D. Barseghyan and P. Exner.
{\it Magnetic field influence on the discrete spectrum of locally deformed leaky wires.}
Rep. Math. Phys. {\bf 88} (2021), 47--57.
%
\bibitem{FH-b} S. Fournais and B. Helffer. {\it Spectral methods in surface superconductivity.} Birkh\"auser, Basel (2010).
%
\bibitem{FS} S. Fournais and  M. Persson-Sundqvist. {\it  Lack of diamagnetism and the Little-Parks effect.} Comm. Math. Phys. {\bf 337} (2015), 191--224.
%
\bibitem{G1} G. Miranda.
{\it Non-monotonicity of the first eigenvalue for the 3D magnetic Robin Laplacian.}
Arch. Math. {\bf 120} (2023), 643--649.
%
\bibitem{G2} G. Miranda.
{\it Discrete spectrum of the magnetic Laplacian on almost flat magnetic barriers.} J. Math. Phys. {\bf 65} (2024), paper no. 072101, 28 pp.
%
\bibitem{HHOO} B. Helffer, M. Hoffmann-Ostenhof, T. Hoffmann-Ostenhof and M. P. Owen.  {\it Nodal sets for groundstates of Schr\"odinger operators with zero magnetic field in non simply connected domains.} Comm. Math. Phys. {\bf 202} (1999), 629--649.
%
\bibitem{HK} B. Helffer and A. Kachmar.
{\it Thin domain limit and counterexamples to strong diamagnetism. }
Rev. Math. Phys. {\bf 33} (2021), paper no. 2150003, 35 pp.
%
\bibitem{HN} I. Herbst and S. Nakamura.
{\it Schr\"odinger operators with strong magnetic fields: quasi-periodicity of spectral orbits and topology. Differential operators and spectral theory}, 105--123.
Amer. Math. Soc. Transl. Ser. 2, 189 (1999).
%
\bibitem{KP} A. Kachmar and X.B. Pan.
{\it Oscillatory patterns in the Ginzburg-Landau model driven by the Aharonov-Bohm potential.}
J. Funct. Anal. {\bf 279} (2020), paper no. 108718, 37 pp.
%
\bibitem{KS} A. Kachmar and  M.P. Sundqvist.
{\it Counterexample to strong diamagnetism for the magnetic Robin Laplacian.}
Math. Phys. Anal. Geom. {\bf 23} (2020),  Paper no. 27, 15 pp.
%
\bibitem{LP} W.A. Little and R.D. Parks. {\it Observation of quantum periodicity in the transition temperature of a superconducting cylinder.} Phys. Rev. Lett. {\bf 9} (1962), 9--12.
%
\bibitem{RS} J. Rubinstein and M. Schatzman. {\it Asymptotics for thin superconducting rings.} J. Math. Pures Appl. {\bf 77} (1998), 801--820.
%
\bibitem{ShSt} T. Shieh and P. Sternberg. {\it The onset problem for a thin superconducting loop in a large magnetic field.} Asymptot. Anal. {\bf 48} (2006), 55--76.
%
\end{thebibliography}
\end{document}